\providecommand{\U}[1]{\protect\rule{.1in}{.1in}}
\providecommand{\U}[1]{\protect\rule{.1in}{.1in}}
\newtheorem{theorem}{Theorem}[section]
\newtheorem{definition}[theorem]{Definition}
\newtheorem{lemma}[theorem]{Lemma}
\newtheorem{proposition}[theorem]{Proposition}
\newtheorem{remark}[theorem]{Remark}
\newenvironment{proof}[1][Proof]{\noindent\textbf{#1.} }{\ \rule{0.5em}{0.5em}}
\numberwithin{equation}{section}
\begin{document}

\title{Novel multi-step predictor-corrector schemes for backward stochastic
differential equations}
\author{Qiang Han \thanks{Zhongtai Securities Institute for Financial Studies,
Shandong University, Jinan, Shandong 250100, PR China.
201411272@mail.sdu.edu.cn. }
\and Shaolin Ji\thanks{Zhongtai Securities Institute for Financial Studies,
Shandong University, Jinan, Shandong 250100, PR China. jsl@sdu.edu.cn
(Corresponding author). Research supported by National Natural Science of
China No. 11971263; 11871458 and National Key R\&D Program of China
No.2018YFA0703900.}}
\maketitle


\textbf{Abstract}. Novel multi-step predictor-corrector numerical schemes have
been derived for approximating decoupled forward-backward stochastic
differential equations (FBSDEs).
The stability and high order rate of convergence of the schemes are rigorously
proved. We also present a sufficient and necessary condition for the stability
of the schemes. Numerical experiments are given to illustrate the stability
and convergence rates of the proposed methods.

{\textbf{Key words}. } decoupled forward-backward stochastic differential
equations, multi-step predictor-corrector schemes, high order discretization, stability

\textbf{AMS subject classifications.} 93E20, 60H10, 35K15

\addcontentsline{toc}{section}{\hspace*{1.8em}Abstract}

\section{Introduction}

To the best of our knowledge, the numerical algorithms for decoupled FBSDEs
can be divided into two:

One branch explores the connection with partial differential equations (PDEs).
To be specific, the solution $(Y_{t},Z_{t})$ of the BSDE in (\ref{2-1}) can be
represented as $Y_{t} = u(t,x_{t}),$ $Z_{t} = \nabla_{x}u(t,x_{t}%
)\sigma(t,x_{t}),$ $t\in[0,T],$ $u\in C_{b}^{1,2}([0,T]\times\mathbb{R}^{d}),$
and $u(t,x_{t})$ is solution of the parabolic PDE
\[
\frac{\partial u(t,x)}{\partial t}+\sum\limits_{i=1}^{d}b_{i}\frac{\partial
u(t,x)}{\partial x_{i}}+ \frac{1}{2}\sum\limits_{i,j=1}^{d}(\sigma\sigma
^{\top})_{ij}\frac{\partial^{2}u(t,x)}{\partial x_{i}\partial x_{j}%
}u(t,x)+f(t,x,u(t,x),\nabla_{x}u(t,x)\sigma(t,x))=0,
\]
with the terminal condition $u(T,x) = \Phi(x).$ In turn, suppose $(Y_{t}%
,Z_{t})$ is the solution of the BSDE in (\ref{2-1}). $u(t,x_{t}) = Y_{t}$ is a
viscosity solution to the PDE. Thus, the numerical approximation of decoupled
FBSDEs is to solve the corresponding parabolic PDEs numerically (see
\cite{13,29,30}). This algorithm may be limited due to high-dimensionality or
lack of smoothness of the coefficients. For this issue, Weinan E et al propose
the deep learning algorithm which can deal with 100-dimensional nonlinear PDEs
(see \cite{3,14,WEMHAJTK19,24,MHAJTK19}). Also, the branching diffusion method
does not suffer from the curse of dimensionality (see \cite{25}) and this
method is extended to the non-Markovian case and the non-linearities case (see
\cite{27} and \cite{26} respectively).

The second branch of algorithms can be implemented via a two-step procedure
which consists of a time-discretization of decoupled FBSDEs and an
approximation procedure for the conditional expectations. Specifically, if the
Euler scheme (explicit, implicit or generalized) is utilized to discretize
decoupled FBSDEs, the order of discretization error is $\frac{1}{2}$ and
sometimes can reach $1$ (see \cite{1,4,7,12,18,20,21,28,37,JZ04}). To obtain
high order accuracy scheme, authors in \cite{38,39} develop two kinds of
multi-step schemes to solve decoupled FBSDEs. The Runge-Kutta schemes and
linear multi-step schemes for approximating decoupled FBSDEs have been
investigated in \cite{10} and \cite{9}. In this paper we extend the
predictor-corrector type based on Adams schemes (see \cite{JFC13}) to the
predictor-corrector type based on general linear multi-step schemes. We also
provide an indicator for the local truncation error by utilizing the
difference between the predicted and the corrected values at each time step
(see Proposition 3.2). Furthermore, we present a sufficient and necessary
condition for the stability of the general scheme (see Theorem 3.6).
Finally, parameters in multi-step schemes are obtained by different methods.
That is to say, the paper \cite{38} adopts derivative approximation; papers
\cite{9,JFC13,39} use Lagrange interpolating polynomials; and we utilize
It\^{o}-Taylor expansion.

From the above review, the time-discretization of decoupled FBSDEs can adopt
low order schemes or high order schemes. Notice that there are a large number
of documents about low order schemes and this implies that the theory of
implementable numerical methods of decoupled FBSDEs is booming. Compared with
the development of the numerical methods of ordinary differential equations
(ODEs) and stochastic differential equations (SDEs), the investigation of high
order accuracy schemes for decoupled FBSDEs is meaningful and necessary.
Moreover, the analysis of Section 4.4 of \cite{20} also maintains that
development of high order accuracy schemes for decoupled FBSDEs is
significant. Hence, for this motivation, we design an available high order
accuracy scheme called the general multi-step predictor-corrector schemes (see
(\ref{2-32})) (see \cite{8} about SDEs which do not have the predictor term).
And this kind of schemes possess the advantage of simple type of error
estimates for decoupled FBSDEs.

The contributions of this paper are as follows.

First, we derive a novel high order scheme for decoupled FBSDEs. The advantage
does not require the solution of an algebraic equation at each step.
Therefore, this can reduce the complexity of calculation. Simultaneously, our
schemes also inherit the virtues of implicit scheme. Second, the stability and
high order property of the scheme (\ref{2-32}) are rigorously proved. Note
that we present a sufficient and necessary condition for the stability of the
scheme (\ref{2-32}). A property of predictor-corrector scheme (see Proposition
3.2) is established in the frame of decoupled FBSDEs. And this property
provides an indicator for the local truncation error by utilizing the
difference between the predicted and the corrected values at each time step.
The high order property of the scheme (\ref{2-32}) is also established.

The structure of this paper is as follows. In Section 2, we present some
fundamental definitions, assumptions and lemmas that can be used in the
following sections. Moreover the Adams schemes of decoupled FBSDEs are
reviewed. We first construct the predictor-corrector schemes (\ref{2-32}).
Then, the stability and high order properties of scheme (\ref{2-32}) are also
found in Section 3. Section 4 presents numerical experiments to illustrate the
stability and convergence rates of algorithms.

\section{Preliminaries}

In this section, we provide some preliminary results and recall the
predictor-corrector scheme of decoupled FBSDEs based on Adams types.

\subsection{Decoupled FBSDE}

In this subsection, we review the decoupled FBSDE and the corresponding propositions.

Let $T>0$ be a fixed terminal time and $(\Omega,\mathcal{F},\mathbb{F}%
,\mathbb{P})$ be a filtered complete probability space where $\mathbb{F}%
=(\mathcal{F}_{t})_{0\le t\le T}$ is the natural filtration of the standard
$d$-dimensional Brownian motion. In the space $(\Omega,\mathcal{F}_{T},
\mathbb{F},\mathbb{P})$, we consider discretizing the decoupled FBSDEs as
below:
\begin{equation}
\left\{
\begin{array}
[c]{rl}%
X_{t}= & x_{0} + \int_{0}^{t} b(s,X_{s})ds + \int_{0}^{t}\sigma(s,X_{s})d
W_{s},~~~~~~~~~~~~~~~~~(SDE)\\
Y_{t}= & \Phi(X_{T}) + \int_{t}^{T} f(s,X_{s},Y_{s},Z_{s})ds - \int_{t}^{T}
Z_{s} d W_{s},~~~~~~(BSDE)
\end{array}
\right.  \label{2-1}%
\end{equation}
where $(X_{s})_{t\le s\le T}$ is a $d$-dimensional diffusion process driven by
the finite $d$-dimensional Brownian motion $(W_{t})_{0\leq t\leq T}$ which is
defined in a filtered complete probability space $(\Omega,\mathcal{F}%
,\mathbb{F},\mathbb{P})$. Set the $\sigma$-algebra $\mathcal{F}_{t,s}%
=\sigma\{W_{r}-W_{t},t\leq r\leq s\},\mathcal{F}=\mathcal{F}_{0,T}.$ In
addition, functions $b,\sigma,\Phi$ and $f$ satisfy:\newline\textbf{Assumption
1.} There exists a non-negative constant $L$ satisfying
\[
|b(t,x_{1})-b(t,x_{2})|+|\sigma(t,x_{1})-\sigma(t,x_{2})|\le L|x_{1}%
-x_{2}|,~~~\forall x_{1},x_{2}\in\mathbb{R}^{d}.
\]
\textbf{Assumption 2.} There exist non-negative constants $C_{f}$ and $L_{f}$
such that

\begin{description}
\item[(i)] $|f(t_{1},x_{1},y_{1},z_{1})-f(t_{2},x_{2},y_{2},z_{2})|\le
L_{f}(\sqrt{|t_{1}-t_{2}|}+|x_{1}-x_{2}|+|y_{1}-y_{2}|+\|z_{1}-z_{2}\|)$ for
all $t_{1}, t_{2}\in[0,T], x_{1}, x_{2}\in\mathbb{R}^{d}$, $y_{1}, y_{2}%
\in\mathbb{R}$ and $z_{1},z_{2}\in\mathbb{R}^{d}$;

\item[(ii)] $|f(t,x,0,0)|\le C_{f}$ on $[0,T]\times\mathbb{R}^{d}$;

\item[(iii)] Function $\Phi$ is measurable and bounded.
\end{description}

For readers' convenience, here we present two lemmas and adapt them to our context.

\begin{lemma}
(see \cite{35}) Assume that functions $b, \sigma, f$ and $\Phi$ are uniformly
Lipschitz with respect to (w.r.t.) $(x,y,z)$ and $\frac{1}{2}$-H\"{o}lder
continuous w.r.t. t. In addition, assume $\Phi$ is of class $C_{b}^{2+\kappa}$
for some $\kappa\in(0,1)$ and the matrix valued function $a = \sigma
\sigma^{\top}= (a_{ij})$ is uniformly elliptic. Then the solution
$(Y_{t},Z_{t})$ of the BSDE in (\ref{2-1}) can be represented as
\[
Y_{t} = u(t,X_{t}),~~~~~~Z_{t} = \nabla_{x}u(t,X_{t})\sigma(t,X_{t}
),~~~~~~t\in[0,T],
\]
where $u\in C_{b}^{1,2}([0,T]\times\mathbb{R}^{d})$ satisfies the parabolic
PDE as below:
\begin{equation}
\mathcal{L}^{(0)}u(t,x)+f(t,x,u(t,x),\nabla_{x}u(t,x)\sigma(t,x))=0,
\label{2-2}%
\end{equation}
with the terminal condition $u(T,x) = \Phi(x)$ where $\mathcal{L}^{(0)}%
=\frac{\partial}{\partial t}+\sum_{i=1}^{d}b_{i}\frac{\partial}{\partial
x_{i}}+ \frac{1}{2}\sum_{i,j=1}^{d}(\sigma\sigma^{\top})_{ij}\frac
{\partial^{2}}{\partial x_{i}\partial x_{j}}$.
\end{lemma}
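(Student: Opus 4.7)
The plan is to prove the representation by first constructing a classical solution $u$ of the semilinear parabolic PDE (\ref{2-2}) and then applying It\^{o}'s formula to $u(t,X_t)$ to identify $(u(t,X_t),\nabla_x u(t,X_t)\sigma(t,X_t))$ with $(Y_t,Z_t)$ via uniqueness of the BSDE.

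First, I would invoke the classical PDE theory for semilinear parabolic equations. Under uniform ellipticity of $a=\sigma\sigma^\top$, the Lipschitz and $\tfrac{1}{2}$-H\"{o}lder regularity of $b,\sigma,f$, and the assumption $\Phi\in C_b^{2+\kappa}$, the standard Schauder estimates (as in Ladyzhenskaya--Solonnikov--Ural'tseva or Friedman) combined with a contraction/fixed-point argument on the nonlinear term $f(t,x,u,\nabla_x u\,\sigma)$ produce a bounded classical solution $u\in C_b^{1,2}([0,T]\times\mathbb{R}^d)$ of (\ref{2-2}) with terminal data $u(T,\cdot)=\Phi$. This step is the main obstacle: one must check that the H\"{o}lder regularity of the coefficients and the growth/Lipschitz conditions on $f$ are compatible with the Schauder framework, and that $\nabla_x u$ is itself bounded and H\"{o}lder continuous so that the composition $f(\cdot,\cdot,u,\nabla_x u\,\sigma)$ lies in the right function space for the a priori estimates to close. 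I would cite [35] (or the equivalent result of Pardoux--Peng / Ma--Protter--Yong) to bypass the technicalities.

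Second, with $u\in C_b^{1,2}$ in hand, apply It\^{o}'s formula to $u(s,X_s)$ on $[t,T]$. Using the SDE in (\ref{2-1}), this gives
\begin{equation*}
u(T,X_T)-u(t,X_t)=\int_t^T \mathcal{L}^{(0)}u(s,X_s)\,ds+\int_t^T \nabla_x u(s,X_s)\sigma(s,X_s)\,dW_s.
\end{equation*}
Substituting $\mathcal{L}^{(0)}u(s,X_s)=-f(s,X_s,u(s,X_s),\nabla_x u(s,X_s)\sigma(s,X_s))$ from the PDE (\ref{2-2}) and $u(T,X_T)=\Phi(X_T)$ from the terminal condition, rearranging yields
\begin{equation*}
u(t,X_t)=\Phi(X_T)+\int_t^T f(s,X_s,u(s,X_s),\nabla_x u(s,X_s)\sigma(s,X_s))\,ds-\int_t^T \nabla_x u(s,X_s)\sigma(s,X_s)\,dW_s.
\end{equation*}

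Finally, define $\widetilde Y_s:=u(s,X_s)$ and $\widetilde Z_s:=\nabla_x u(s,X_s)\sigma(s,X_s)$. Boundedness of $u$ and $\nabla_x u$ together with the linear growth of $\sigma$ and square-integrability of $X$ ensure $(\widetilde Y,\widetilde Z)$ lies in the right $L^2$-spaces of adapted processes, so the identity above exhibits $(\widetilde Y,\widetilde Z)$ as a solution of the BSDE in (\ref{2-1}). Under Assumption~2 the BSDE admits a unique solution (Pardoux--Peng), hence $(Y_t,Z_t)=(\widetilde Y_t,\widetilde Z_t)$, proving the stated representation. The only nontrivial ingredient is the first step (classical regularity of $u$); the remainder is a direct application of It\^{o}'s formula and BSDE uniqueness.
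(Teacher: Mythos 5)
The paper gives no proof of this lemma at all---it is quoted verbatim from Peng \cite{35}---and your sketch is precisely the standard nonlinear Feynman--Kac argument underlying that citation: classical solvability of the semilinear parabolic PDE (via Schauder estimates under uniform ellipticity and $\Phi\in C_b^{2+\kappa}$), It\^{o}'s formula applied to $u(s,X_s)$, and identification of $(u(t,X_t),\nabla_x u(t,X_t)\sigma(t,X_t))$ with $(Y_t,Z_t)$ through uniqueness of the Lipschitz BSDE. Your outline is correct, and your decision to defer the PDE-regularity step to \cite{35} (or the classical parabolic literature) is exactly where the genuine technical work sits, so the proposal matches the intended source argument rather than offering a different route.
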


\begin{lemma}
(see Proposition 2.2 in \cite{10}) Let $n\ge0$. Then for a function
$v\in\mathcal{A}_{b}^{n+1},$
\begin{equation}
\mathbb{E}_{t}[v(t+h,X_{t+h})]=v_{t}+hv_{t}^{(0)}+\frac{h^{2}}{2}v_{t}%
^{(0,0)}+\cdots+\frac{h^{n}}{n!}v_{t}^{(0)_{n}}+O(h^{n+1}), \nonumber
\end{equation}

\end{lemma}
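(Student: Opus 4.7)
The plan is to prove this by an iterated application of It\^o's formula, structured as an induction on $n$; this is the standard route to a conditional It\^o--Taylor expansion.

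For the base case $n=0$, I would apply It\^o's formula to $v(s,X_{s})$ over $s\in[t,t+h]$:
\begin{equation*}
v(t+h,X_{t+h})=v(t,X_{t})+\int_{t}^{t+h}\mathcal{L}^{(0)}v(s,X_{s})\,ds+\int_{t}^{t+h}(\nabla_{x}v)(s,X_{s})\sigma(s,X_{s})\,dW_{s}.
\end{equation*}
Taking the conditional expectation $\mathbb{E}_{t}[\cdot]$ annihilates the stochastic integral (it is an $\mathcal{F}_{t}$-martingale starting at $t$), while the drift term is bounded in modulus by $Ch$ because the assumption $v\in\mathcal{A}_{b}^{1}$ ensures that $\mathcal{L}^{(0)}v$ is bounded. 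This yields the claim for $n=0$.

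For the inductive step, assuming the expansion up to order $n-1$, I would peel off one more term by reapplying It\^o's formula to $\mathcal{L}^{(0)_{k}}v(s,X_{s})$ inside each remaining iterated time integral:
\begin{equation*}
\mathcal{L}^{(0)_{k}}v(s,X_{s})=\mathcal{L}^{(0)_{k}}v(t,X_{t})+\int_{t}^{s}\mathcal{L}^{(0)_{k+1}}v(r,X_{r})\,dr+\int_{t}^{s}(\nabla_{x}\mathcal{L}^{(0)_{k}}v)(r,X_{r})\sigma(r,X_{r})\,dW_{r},
\end{equation*}
then take $\mathbb{E}_{t}[\cdot]$ to kill the martingale pieces, and use the identity $\int_{t}^{t+h}\!\int_{t}^{s_{1}}\!\cdots\!\int_{t}^{s_{k-1}} 1\, ds_{k}\cdots ds_{1}=h^{k}/k!$ to assemble the deterministic contributions into the Taylor polynomial $v_{t}+hv_{t}^{(0)}+\cdots+\frac{h^{n}}{n!}v_{t}^{(0)_{n}}$.

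After $n+1$ such iterations, the remainder is an $(n+1)$-fold iterated time integral of $\mathcal{L}^{(0)_{n+1}}v(r,X_{r})$. The only nonroutine step is bounding this remainder by $O(h^{n+1})$, which I would handle by invoking the hypothesis $v\in\mathcal{A}_{b}^{n+1}$: it guarantees the boundedness of $\mathcal{L}^{(0)_{n+1}}v$, so the remainder is controlled in absolute value by a constant times $h^{n+1}/(n+1)!$. The main obstacle is bookkeeping: verifying at each step that the It\^o correction indeed produces a deterministic constant multiplied by an iterated time integral of the prescribed form, and that each intermediate stochastic integral is a true martingale so that $\mathbb{E}_{t}[\cdot]$ discards it; both facts hinge on the regularity encoded in $\mathcal{A}_{b}^{n+1}$.
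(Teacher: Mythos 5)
Your proposal is correct and is essentially the standard argument: the paper itself does not prove this lemma but simply cites Proposition 2.2 of Chassagneux--Crisan \cite{10}, whose proof is exactly the iterated conditional It\^o--Taylor expansion you describe, with the martingale terms annihilated by $\mathbb{E}_t[\cdot]$ and the remainder controlled by the boundedness of $\mathcal{L}^{(0)_{n+1}}v$ guaranteed by $v\in\mathcal{A}_b^{n+1}$. No gaps; the bookkeeping you flag (true-martingale property of each stochastic integral and the $h^{k}/k!$ weights from the iterated time integrals) is handled precisely as you indicate.
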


where $\mathbb{E}_{t}[\cdot]=\mathbb{E}[\cdot|\mathcal{F}_{t}]$;
$v_{t}^{\alpha}=v^{\alpha}(t,X_{t})$; $\mathcal{A}^{n}_{b},n\ge1$ is the set
of functions $v: [0,T]\times\mathbb{R}^{d}\rightarrow\mathbb{R}$ such that
$v\in\mathcal{A}^{\alpha}_{b}$ for all multi-index with finite length
$\alpha\in\{\alpha|\ell(\alpha)\le n\}\setminus\{\oslash\}$ is well defined,
continuous and bounded; $\mathcal{A}^{\alpha}_{b}$ denotes the subset of all
functions $v\in\mathcal{A}^{\alpha}$ such that the function $\mathcal{L}%
^{\alpha}v$ is bounded; $\mathcal{A}^{\alpha}$ is the set of all functions
$v:[0,T]\times\mathbb{R}^{d} \rightarrow\mathbb{R}$ for which $\mathcal{L}%
^{\alpha}v$ is well defined and continuous; $\ell(\alpha)$ is the length of a
multi-index of $\alpha$; let $v^{(0)}=\mathcal{L}^{(0)}v,v^{(0,0)}%
=\mathcal{L}^{(0)}\circ\mathcal{L}^{(0)}v,\cdots,$ $v^{(0)_{n}}=%
\begin{matrix}
\underbrace{\mathcal{L}^{(0)}\circ\cdots\circ\mathcal{L}^{(0)}}_{n}%
\end{matrix}
v.$

\subsection{Predictor-corrector discretization of the BSDE via Adams types}

In this subsection, for readers' convenience to understanding the following
text, we review the predictor-corrector discrete-time approximations of BSDE
with respect to $Y$ by Adams types (see \cite{JFC13}). As for the
time-discretization of $Z$, we adopt the scheme proposed in \cite{38}.

Before approximating solutions of the BSDEs, we first define a uniform
partition $\pi=\{t_{0}:=0<t_{1}<t_{2}\cdots<t_{N}:=T\}$ and the step
$h=\frac{T}{N}$, $\Delta W_{i}= W_{t_{i+1}}- W_{t_{i}},W_{i}= W_{t_{i}}$. We
consider the classical Euler discretization $X^{\pi}$ of the SDE
\begin{equation}
\left\{
\begin{array}
[c]{rl}%
X_{i+1}^{\pi}= & X_{i}^{\pi}+h b(t_{i},X_{i}^{\pi}) +\sigma(t_{i},X_{i}^{\pi
})\Delta W_{i},~~~i=0,1,\cdots,N-1,\\
X_{0}^{\pi}= & x_{0}.
\end{array}
\right. \nonumber
\end{equation}
It is known that $\sup\limits_{0\le i\le N}\mathbb{E}[|X_{t_{i}}-X_{i}^{\pi
}|^{2}]\rightarrow0$, as $h\rightarrow0$.

For non-stiff problems, Adams type is the most important linear multi-step
method. Its solution approximation at $t_{i}$ is defined either as
\begin{equation}
Y_{i}^{\pi}= \mathbb{E}_{i}\Big[Y_{i+1}^{\pi}+ h\sum_{\ell=1}^{k}\beta_{\ell
}f^{\pi}_{i+\ell}\Big], \label{2-10}%
\end{equation}
or as
\begin{equation}
Y_{i}^{\pi}= \mathbb{E}_{i}\Big[Y_{i+1}^{\pi}+ h\beta_{0}f_{i}^{\pi}+
h\sum_{\ell=1}^{k}\beta_{\ell}f^{\pi}_{i+\ell}\Big], \label{2-11}%
\end{equation}
where $Y_{i}^{\pi}$ and $Z_{i}^{\pi}$ denote the discretization form of $Y$
and $Z$ at $t_{i}$ and $f_{i}^{\pi}=f(t_{i},X_{i}^{\pi},Y_{i}^{\pi},Z_{i}%
^{\pi})$, $i=0,1,\cdots,N$; $\mathbb{E}_{i}[\cdot]=\mathbb{E}_{t_{i}}[\cdot]$;
$\beta_{0}\neq0$ and $\{\beta_{\ell}\}_{1\le\ell\le k}$ are real numbers and
$k\in\mathbb{N}^{+}$.

If we utilize the equation (\ref{2-11}) as the time-discretization of $Y$, we
are required the solution of an algebraic equation at each step because the
equation (\ref{2-11}) is implicit. To solve $Y$ in an explicit way, we can
first approximate $Y$ by the equation (\ref{2-10}). Now, the obtained value of
$Y$ is denoted as $\widetilde{Y}_{i}^{\pi}$, namely
\begin{equation}
\widetilde{Y}^{\pi}_{i} = \mathbb{E}_{i}[Y^{\pi}_{i+1} +h\sum\limits_{j=1}%
^{\widetilde{k}}\widetilde{\beta}_{j}f^{\pi}_{i+j}],\label{2-3}%
\end{equation}
where $\widetilde{k}\in\mathbb{N}^{+}$; $\widetilde{\beta}_{1}%
,\widetilde{\beta}_{2},\cdots,\widetilde{\beta}_{\widetilde{k}}$ are constants
and would be given in the following. Next, we use the improved equation
(\ref{2-11}) to approximate $Y$, namely
\begin{equation}
Y_{i}^{\pi}= \mathbb{E}_{i}[Y_{i+1}^{\pi}+ h \beta_{0}\widetilde{f}_{i}^{\pi}+
h\sum\limits_{j=1}^{k}\beta_{j}f^{\pi}_{i+j}], \label{2-4}%
\end{equation}
where $\widetilde{f}_{i}^{\pi}=f(t_{i},X_{i}^{\pi},\widetilde{Y}_{i}^{\pi
},Z_{i}^{\pi})$, for $i=N-1,N-2,\cdots,0$.

Next, we review the time-discretization of $Z$. From the BSDE in (\ref{2-1}),
we know
\begin{equation}
Y_{t_{i}}= Y_{r} + \int_{t_{i}}^{r} f(s,X_{s},Y_{s},Z_{s})ds -\int_{t_{i}}^{r}
Z_{s} d W_{s},~~~~~~r\in[t_{i},T]. \label{2-22}%
\end{equation}
Multiplying the above equation by $(W_{r}-W_{i})^{\top}$, and taking
conditional expectation, we obtain
\begin{equation}
0 = \mathbb{E}_{i}\Big[Y_{r}(W_{r}-W_{i})^{\top}\Big] + \int_{t_{i}}^{r}
\mathbb{E}_{i}\Big[f(s,X_{s},Y_{s},Z_{s})(W_{r}-W_{i})^{\top}\Big] ds -
\int_{t_{i}}^{r} \mathbb{E}_{i}[ Z_{s}] ds. \label{2-23}%
\end{equation}
Differentiating the equation (\ref{2-23}) w.r.t. $r$, we have
\begin{equation}
\frac{d\mathbb{E}_{i}\Big[Y_{r}(W_{r}-W_{i})^{\top}\Big]}{dr} = -\mathbb{E}%
_{i}\Big[f(r,X_{r},Y_{r},Z_{r})(W_{r}-W_{i})^{\top}\Big] + \mathbb{E}%
_{i}[Z_{r}]. \label{2-24}%
\end{equation}

Let $u\in C_{b}^{m+1}$, we apply Taylor's expansion at $t_{i}$ for function
$u(t)$, that is, for $n =0,1,2,\cdots,m$
\begin{equation}
u(t_{i} + nh) = u(t_{i}) + nhu^{\prime}(t_{i}) + \frac{(nh)^{2}}{2!}%
u^{\prime\prime}(t_{i}) +\frac{(nh)^{3}}{3!}u^{\prime\prime\prime}%
(t_{i})+\cdots. \label{2-25}%
\end{equation}
Moreover,
\begin{equation}
\sum_{n=0}^{m}\lambda_{m,n}u(t_{i} + nh) = \sum_{j=0}^{2}\frac{\sum
\limits_{n=0}^{m}\lambda_{m,n}(nh)^{j}}{j!} \frac{d^{j}u}{dt^{j}}(t_{i}) +
\mathcal{O}\Big(\sum_{n=0}^{m}\lambda_{m,n}(nh)^{m+1}\Big), \label{2-26}%
\end{equation}
where $\lambda_{m,0},\lambda_{m,1},\cdots,\lambda_{m,n}$ are real numbers. Let
$\lambda_{m,n},n =0,1,2,\cdots,m$ such that
\begin{equation}
\frac{1}{j!}\sum_{n=0}^{m}\lambda_{m,n}(nh)^{j}= \left\{
\begin{array}
[c]{rl}%
1, & j=1,\\
0, & j\neq1.
\end{array}
\right.  \label{2-27}%
\end{equation}
Hence, we deduce
\begin{equation}
\frac{du}{dt}(t_{i}) = \sum_{n=0}^{m}\lambda_{m,n}u(t_{i} + nh)+
\mathcal{O}\Big(\sum_{n=0}^{m}\lambda_{m,n}(nh)^{m+1}\Big). \label{2-28}%
\end{equation}
From the above equation, we have
\begin{equation}
\frac{d\mathbb{E}_{i}\big[Y_{r}(W_{r}-W_{i})^{\top}\big]}{dr}\Bigg|_{r =
t_{i}} =\sum_{n=0}^{m}\lambda_{m,n}\mathbb{E}_{i}\big[Y_{t_{i+n}}%
(W_{i+n}-W_{i})^{\top}\big]+R_{Z,i}, \label{2-29}%
\end{equation}
where $R_{Z,i} = \frac{d\mathbb{E}_{i}\big[Y_{r}(W_{r}-W_{i})^{\top}\big]}%
{dr}\Bigg|_{r = t_{i}} - \sum\limits_{n=0}^{m}\lambda_{m,n}\mathbb{E}%
_{i}\big[Y_{t_{i+n}}(W_{i+n}-W_{i})^{\top}\big]$. Combining (\ref{2-24}) with
(\ref{2-29}), we obtain
\begin{equation}
Z_{t_{i}} =\sum_{n=1}^{m}\lambda_{m,n}\mathbb{E}_{i}\big[Y_{t_{i+n}}%
(W_{i+n}-W_{i})^{\top}\big]+R_{Z,i}.\nonumber
\end{equation}
Hence, the time-discretization of $Z$ is, for $i=N-m,N-m-1,\cdots,0$
\begin{equation}
Z_{i}^{\pi}= \sum_{n=1}^{m}\lambda_{m,n}\mathbb{E}_{i}\big[Y_{i+n}^{\pi
}(W_{i+n}-W_{i})^{\top}\big].\label{2-30}%
\end{equation}

Let $\widetilde{Y}_{i}^{\pi}$ denote the approximation to $u(t_{i},X^{\pi}%
_{i})$ via the predictor part. Set the improved approximation $Y_{i}^{\pi}$
found in the corrector part. $\widetilde{\beta_{j}}$ replaces the value of
$\beta_{j}$ in the Adams-Bashforth formula while $\beta_{j}$ denotes the
Adams-Moulton coefficients. Correspondingly, the parameter $k$ denotes in the
Adams-Moulton formula and the Adams-Bashforth formula can be denoted by
$\widetilde{k}$. Hence, the predictor-corrector scheme based on the Adams
types could be expressed as below, for $i=N-\max(\widetilde{k},k),\cdots
,1,0:$
\begin{equation}
\left\{
\begin{array}
[c]{rl}%
\widetilde{Y}^{\pi}_{i} = & \mathbb{E}_{i}[Y^{\pi}_{i+1} +h\sum\limits_{j=1}%
^{\widetilde{k}}\widetilde{\beta}_{j}f^{\pi}_{i+j}],\\
Y_{i}^{\pi}= & \mathbb{E}_{i}[Y_{i+1}^{\pi}+ h \beta_{0}\widetilde{f}_{i}%
^{\pi}+ h\sum\limits_{j=1}^{k}\beta_{j}f^{\pi}_{i+j}],\\
Z_{i}^{\pi}= & \sum\limits_{n=1}^{\max(\widetilde{k},k)}\lambda_{\max
(\widetilde{k},k),n}\mathbb{E}_{i}\big[Y_{i+n}^{\pi}(W_{i+n}-W_{i})^{\top
}\big],
\end{array}
\right.  \label{2-15}%
\end{equation}
where $\widetilde{\beta}_{1},\widetilde{\beta}_{2},\cdots,\widetilde{\beta
}_{\widetilde{k}}$ and $\beta_{0},\beta_{1},\beta_{2},\cdots,\beta_{k}$ are
constants and would be given in the following. This scheme is implemented by
means of Adams types i.e. Adams-Bashforth method is adopted by a preliminary
computation. Subsequently, this numerical solution is used in the
Adams-Moulton formula to yield the derivative value at the new point. The
original idea of this scheme is extending the Euler method via allowing the
numerical solution to depend on several previous step values of solutions and
derivatives (see \cite{2,31,32,33,34} for detail about ODEs and \cite{36}
w.r.t. SDEs). The scheme (\ref{2-15}) is referred to as the
predictor-corrector method because the total calculation in a step is made up
of a preliminary prediction of the numerical solution and followed by a
correction of this predicted answer.

Usually, the coefficients $k$ and $\widetilde{k}$ can take different values.
To obtain the same order of local truncation error, the coefficients $k$ and
$\widetilde{k}$ have the relation $\widetilde{k}= k+1$. In addition, the
scheme (\ref{2-15}) can be rewritten as, for $i=N-k-1,\cdots,1,0:$
\begin{equation}
\left\{
\begin{array}
[c]{rl}%
\widetilde{Y}_{i}^{\pi}= & \mathbb{E}_{i}[Y_{i+1}^{\pi}+h\sum\limits_{j=1}%
^{k+1}\widetilde{\beta}_{j}f^{\pi}_{i+j}],\\
Y^{\pi}_{i} = & \mathbb{E}_{i}[Y^{\pi}_{i+1} + h \beta_{0}\widetilde{f}^{\pi
}_{i} + h\sum\limits_{j=1}^{k}\beta_{j}f^{\pi}_{i+j}],\\
Z^{\pi}_{i} = & \sum\limits_{n=1}^{k+1}\lambda_{k+1,n}\mathbb{E}%
_{i}\big[Y_{i+n}^{\pi}(W_{i+n}-W_{i})^{\top}\big].
\end{array}
\right.  \label{2-16}%
\end{equation}

The scheme (\ref{2-15}) provides an algorithm for calculating $(Y^{\pi
}_{N-k-1},Z^{\pi}_{N-k-1})$ in terms of $(Y^{\pi}_{N},Z^{\pi}_{N}),$ $(Y^{\pi
}_{N-1},Z^{\pi}_{N-1}),$ $\cdots,$ $(Y^{\pi}_{N-k},Z^{\pi}_{N-k})$. The
subsequent approximation solutions can be found via the same manner. However,
one has to consider how to obtain the value of $(Y^{\pi}_{N-1},Z^{\pi}%
_{N-1}),$ $(Y^{\pi}_{N-2},Z^{\pi}_{N-2}),\cdots,(Y^{\pi}_{N-k},Z^{\pi}_{N-k}%
)$. Of course, it is possible to evaluate $(Y^{\pi}_{N-1},Z^{\pi}%
_{N-1}),(Y^{\pi}_{N-2},Z^{\pi}_{N-2}),\cdots,(Y^{\pi}_{N-k},Z^{\pi}_{N-k})$
via a low order method, such as Euler scheme. Nevertheless, this maybe
introduce much bigger errors and lead to nullification of the advantages of
the subsequent use of the high order scheme. For this difficulty, we can
utilize the Runge-Kutta scheme which is presented by J.-F. Chassagneux and D.
Crisan \cite{10} or the scheme (\ref{2-15}) with $\widetilde{k}=1,k=0$ with a
smaller time step (see \cite{38} for details).

In what follows, before providing the parameters in scheme (\ref{2-16}), we
first give the following definition.

\begin{definition}
Suppose that $\left( u(t,X_{t}),\nabla_{x}u(t,X_{t})\sigma(t,X_{t})\right) $
is the exact solution of the BSDE in (\ref{2-1}). Let the local truncation
error with respect to $Y$ be
\begin{equation}
T_{i} = u(t_{i},X_{i}^{\pi}) - Y_{i}^{\pi},\nonumber
\end{equation}
where $Y_{i}^{\pi}$ denotes the numerical solution of the BSDE in (\ref{2-1}).
Furthermore, the multi-step scheme (\ref{2-16}) with respect to $Y$ is said to
have $n$-order accuracy ($n\in\mathbb{N}^{+}$) if the local truncation error
$T_{i}$ satisfies $T_{i} = O(h^{n+1})$.
\end{definition}

From Lemma 2.1, the integrand $\mathbb{E}_{t}[f(s,X_{s},Y_{s},Z_{s})],s>t$ is
a continuous function w.r.t. $s$. Then, by taking derivative w.r.t. $s$ on
\[
\mathbb{E}_{t}[Y_{s}]= \mathbb{E}_{t}[\Phi(X_{T})] + \int_{s}^{T}
\mathbb{E}_{t}[f(\bar{s},X_{\bar{s}},Y_{\bar{s}},Z_{\bar{s}})]d\bar{s},
\quad\forall s\in[t,T],
\]
we obtain the following reference ordinary differential equation
\begin{equation}
\frac{d\mathbb{E}_{t}[Y_{s}]}{ds}=-\mathbb{E}_{t}[f(s,X_{s},Y_{s}%
,Z_{s})],~~~s\in\lbrack t,T]. \label{2-12}%
\end{equation}

Assume that no errors have yet been introduced when the approximation at
$(t_{i},X_{i})$ is about to be calculated. By (\ref{2-12}), we get
$\frac{d\mathbb{E}_{i}[Y^{\pi}_{i+j}]}{dt}= -\mathbb{E}_{i}[f^{\pi}%
_{i+j}]=\mathbb{E}_{i}[u^{(0)}(t_{i+j},X^{\pi}_{i+j})],j=0,1,2,\cdots$. Thus,
\begin{align}
T_{i}= &  \mathbb{E}_{i}\Big[u(t_{i},X^{\pi}_{i}) - u(t_{i+1},X^{\pi}_{i+1}) -
h \sum_{\ell=0}^{k}\beta_{\ell}f^{\pi}_{i+\ell}\Big]\nonumber\\
=  &  \mathbb{E}_{i}\Big[u(t_{i},X^{\pi}_{i}) - u(t_{i+1},X^{\pi}_{i+1}) +
h\sum_{\ell=0}^{k}\beta_{\ell}u^{(0)}(t_{i+\ell},X^{\pi}_{i+\ell
})\Big]\nonumber\\
= &  \mathbb{E}_{i}\Big[hu^{(0)}(t_{i},X^{\pi}_{i})(-1+\beta_{0}+\beta
_{1}+\beta_{2}+\cdots+\beta_{k})\nonumber\\
&  +h^{2}u^{(0,0)}(t_{i},X^{\pi}_{i})(-\frac{1}{2}+\beta_{1}+2\beta_{2}%
+\cdots+k\beta_{k})\nonumber\\
&  +h^{3}u^{(0,0,0)}(t_{i},X^{\pi}_{i})\big(-\frac{1}{6}+\frac{1}{2}(\beta
_{1}+2^{2}\beta_{2}+\cdots+k^{2}\beta_{k})\big)\nonumber\\
&  +\cdots\nonumber\\
&  +h^{k}u^{(0)_{k}}(t_{i},X^{\pi}_{i})\big(-\frac{1}{k!}+\frac{1}%
{(k-1)!}(\beta_{1}+2^{k-1}\beta_{2}+\cdots+k^{k-1}\beta_{k})\big)\Big].
\label{2-13}%
\end{align}
Then $T_{i}$ has an expression as below via the equation (\ref{2-13})
\begin{equation}
C_{0}u(t_{i},X^{\pi}_{i}) + C_{1}hu^{(0)}(t_{i},X^{\pi}_{i})+C_{2}%
h^{2}u^{(0,0)}(t_{i},X^{\pi}_{i})+ \cdots+C_{k}h^{k}u^{(0)_{k}}(t_{i},X^{\pi
}_{i})+ O(h^{k+1}). \label{2-14}%
\end{equation}
If $C_{0} = C_{1}=\cdots=C_{k} =0, C_{k+1}\neq0$, then the local truncation
error can be estimated as $O(h^{k+1})$. Now, the method has order $k$. In
Table 1, we provides the value of parameters for $k=1,2,3,4,5,6$ (for
$k=1,2,3,4$ see the Table in page 16 of \cite{JFC13}). \begin{table}[h]
\caption{ coefficients for predictor-corrector scheme based on Adams type}%
\centering\resizebox{\textwidth}{40mm}{
\begin{tabular}
[c]{|cc|cc|ccccccc|c|}\hline
\multicolumn{2}{|c|}{order} & \multicolumn{2}{c|}{term} & $\beta_{0}$ &
$\beta_{1}$ & $\beta_{2}$ & $\beta_{3}$ & $\beta_{4}$ & $\beta_{5}$ &
$\beta_{6}$ & error~~constant\\\hline
\multicolumn{2}{|c|}{{1}} & \multicolumn{2}{c|}{predictor} & 0 & 1 &  &  &  &
&  & $\frac{1}{2}$\\
\multicolumn{2}{|c|}{} & \multicolumn{2}{c|}{corrector} & 1 & 0 &  &  &  &  &
& $-\frac{1}{2}$\\\hline
\multicolumn{2}{|c|}{{2}} & \multicolumn{2}{c|}{predictor} & 0 & $\frac{3}{2}$
& $-\frac{1}{2}$ &  &  &  &  & $-\frac{5}{12}$\\
\multicolumn{2}{|c|}{} & \multicolumn{2}{c|}{corrector} & $\frac{1}{2}$ &
$\frac{1}{2}$ &  &  &  &  &  & $\frac{1}{12}$\\\hline
\multicolumn{2}{|c|}{{3}} & \multicolumn{2}{c|}{predictor} & 0 & $\frac
{23}{12}$ & $-\frac{4}{3}$ & $\frac{5}{12}$ &  &  &  & $\frac{3}{8}$\\
\multicolumn{2}{|c|}{} & \multicolumn{2}{c|}{corrector} & $\frac{5}{12}$ &
$\frac{2}{3}$ & $-\frac{1}{12}$ &  &  &  &  & $-\frac{1}{24}$\\\hline
\multicolumn{2}{|c|}{{4}} & \multicolumn{2}{c|}{predictor} & 0 & $\frac
{55}{24}$ & $-\frac{59}{24}$ & $\frac{37}{24}$ & $-\frac{3}{8}$ &  &  &
$-\frac{251}{720}$\\
\multicolumn{2}{|c|}{} & \multicolumn{2}{c|}{corrector} & $\frac{3}{8}$ &
$\frac{19}{24}$ & $-\frac{5}{24}$ & $\frac{1}{24}$ &  &  &  & $\frac{19}{720}$\\\hline
\multicolumn{2}{|c|}{{5}} & \multicolumn{2}{c|}{predictor} & 0 & $\frac
{1901}{720}$ & $-\frac{1387}{360}$ & $\frac{109}{30}$ & $-\frac{637}{360}$ &
$\frac{251}{720}$ &  & $\frac{95}{288}$\\
\multicolumn{2}{|c|}{} & \multicolumn{2}{c|}{corrector} & $\frac{251}{720}$ &
$\frac{323}{360}$ & $-\frac{11}{30}$ & $\frac{53}{360}$ & $-\frac{19}{720}$ &
&  & $-\frac{3}{160}$\\\hline
\multicolumn{2}{|c|}{{6}} & \multicolumn{2}{c|}{predictor} & 0 & $\frac
{4277}{1440}$ & $-\frac{2641}{480}$ & $\frac{4991}{720}$ & $-\frac{3649}{720}$
& $\frac{959}{480}$ & $-\frac{95}{288}$ & $-\frac{19087}{60480}$\\
\multicolumn{2}{|c|}{} & \multicolumn{2}{c|}{corrector} & $\frac{95}{288}$ &
$\frac{1427}{1440}$ & $-\frac{133}{240}$ & $\frac{241}{720}$ & $-\frac
{173}{1440}$ & $\frac{3}{160}$ &  & $\frac{863}{60480}$\\\hline
\end{tabular}}\end{table}

\section{Main results}

In this part, we introduce the predictor-corrector type general linear
multi-step schemes of decoupled FBSDEs in detail and investigate the
corresponding stability and convergence.

\subsection{Predictor-corrector discretization via the general linear
multi-step scheme}

In this subsection, we extend linear multi-step schemes (\cite{9,JFC13}) to
the predictor-corrector type general linear multi-step schemes.

Our aim is to deduce the discretization of BSDE backward in time based on the
general linear multi-step scheme if $\{Y_{l}^{\pi}\}_{N-m+1\le l\le N}$ and
$\{Z_{l}^{\pi}\}_{N-m+1\le l\le N}$ are available. Namely, for
$i=N-m,N-m-1,\cdots,0$
\begin{equation}
Y_{i}^{\pi}= \mathbb{E}_{i}\Big[\sum_{j=1}^{m} \alpha_{j}Y_{i+j}^{\pi}+
\sum_{j=1}^{m} \gamma_{j}hf_{i+j}^{\pi}\Big], \label{2-20}%
\end{equation}
or as
\begin{equation}
Y_{i}^{\pi}= \mathbb{E}_{i}\Big[\sum_{j=1}^{m} \alpha_{j}Y_{i+j}^{\pi}+
\gamma_{0}hf_{i}^{\pi}+ \sum_{j=1}^{m} \gamma_{j}hf_{i+j}^{\pi}\Big],
\label{2-21}%
\end{equation}
where $\{\alpha_{l}\}_{1\le l\le m}$ and $\{\gamma_{l}\}_{1\le l\le m}$ are
real numbers. In particular, let $\gamma_{0}\neq0$ be a real number. Now,
(\ref{2-20}) is an explicit scheme with respect to $Y$, while (\ref{2-21}) is
an implicit scheme.

As for the time-discretization of the term $Z$, we adopt the scheme presented
in the subsection 2.2. Thus, the equations (\ref{2-21}) and (\ref{2-30})
consist of a discrete-time approximation $(Y_{i}^{\pi},Z_{i}^{\pi})$ for
$(Y,Z)$ at $t_{i}$: for $i=N$
\begin{align}
Y_{N}^{\pi}= \Phi(X_{N}^{\pi}),~~~Z_{N}^{\pi}=\sigma(t_{N},X_{N}^{\pi}%
)D_{x}\Phi(X_{N}^{\pi}).\nonumber
\end{align}
For $i=N-1,N-2,\cdots,N-m+1$, an appropriate one-step scheme can be utilized
to solve the BSDE. For example, we can adjust the parameters of the scheme
(\ref{2-15}) such that it becomes one-step scheme and satisfies the required
accuracy by using a smaller time step. For $i= N-m,N-m-1,\cdots,1,0$
\begin{equation}
\left\{
\begin{array}
[c]{rl}%
Y_{i}^{\pi}= & \mathbb{E}_{i}\Big[\sum\limits_{j=1}^{m} \alpha_{j}Y_{i+j}%
^{\pi}+ \gamma_{0}hf_{i}^{\pi}+ \sum\limits_{j=1}^{m} \gamma_{j}hf_{i+j}^{\pi
}\Big],\\
Z_{i}^{\pi}= & \sum\limits_{n=1}^{m}\lambda_{m,n}\mathbb{E}_{i}\big[Y_{i+n}%
^{\pi}(W_{i+n}-W_{i})^{\top}\big].
\end{array}
\right.  \label{2-31}%
\end{equation}
This scheme is explicit w.r.t. $Z$ and implicit w.r.t. $Y$. Of course, we can
calculate the numerical solutions of BSDE via (\ref{2-31}). But in general,
the implicit scheme requires an algebraic equation to be solved at each time
step. This imposes an additional computational burden. For this difficulty, we
introduce the predictor-corrector method. The general linear multi-step
predictor-corrector method is constructed as below:
\begin{equation}
\left\{
\begin{array}
[c]{rl}%
\widetilde{Y}_{i}^{\pi}= & \mathbb{E}_{i}\Big[\sum\limits_{j=1}^{m}
\widetilde{\alpha}_{j}Y_{i+j}^{\pi}+ \sum\limits_{j=1}^{m} \widetilde{\gamma
}_{j}hf_{i+j}^{\pi}\Big],\\
Y_{i}^{\pi}= & \mathbb{E}_{i}\Big[\sum\limits_{j=1}^{m} \alpha_{j}Y_{i+j}%
^{\pi}+ \gamma_{0}h\widetilde{f}_{i}^{\pi}+ \sum\limits_{j=1}^{m} \gamma
_{j}hf_{i+j}^{\pi}\Big],\\
Z_{i}^{\pi}= & \mathbb{E}_{i}\big[\sum\limits_{j=1}^{m}\lambda_{m,j}%
Y_{i+j}^{\pi}(W_{i+j}-W_{i})^{\top}\big],
\end{array}
\right.  \label{2-32}%
\end{equation}
where $\{\widetilde{\alpha}_{l}\}_{1\le l\le m}$ and $\{\widetilde{\gamma}%
_{l}\}_{1\le l\le m}$ are real numbers. At the $i$-th time step, the predictor
is constructed by using an explicit general linear multi-step scheme which
predicts a value of $Y$ denoted by $\widetilde{Y}_{i}^{\pi}$. Then the
corrector whose structure is similar to an implicit general linear multi-step
scheme is applied to correct the predicted value. We emphasize that not only
the predictor step is explicit, but also the corrector step is explicit.

Next, we provide two schemes which are the variant forms of the scheme
(\ref{2-32}). In other words, these schemes are the special cases of
(\ref{2-32}). If the predictor term $\widetilde{Y}$ is calculated via the
Adams-Bashforth method, the scheme (\ref{2-32}) can be restated as below:
\begin{equation}
\left\{
\begin{array}
[c]{rl}%
\widetilde{Y}^{\pi}_{i} = & \mathbb{E}_{i}[Y^{\pi}_{i+1} +h\sum\limits_{j=1}%
^{\widetilde{m}}\widetilde{\beta}_{j}f^{\pi}_{i+j}],\\
Y_{i}^{\pi}= & \mathbb{E}_{i}\Big[\sum\limits_{j=1}^{m} \alpha_{j}Y_{i+j}%
^{\pi}+ \gamma_{0}h\widetilde{f}_{i}^{\pi}+ \sum\limits_{j=1}^{m} \gamma
_{j}hf_{i+j}^{\pi}\Big],\\
Z_{i}^{\pi}= & \mathbb{E}_{i}\big[\sum\limits_{j=1}^{m}\lambda_{m,j}%
Y_{i+j}^{\pi}(W_{i+j}-W_{i})^{\top}\big].
\end{array}
\right.  \label{2-33}%
\end{equation}
We can also naturally derive the following linear multi-step scheme by
changing the calculation expression of $Z$ (see \cite{20}).
\begin{equation}
\left\{
\begin{array}
[c]{rl}%
\widetilde{Y}_{i}^{\pi}= & \mathbb{E}_{i}\Big[\sum\limits_{j=1}^{m}
\widetilde{\alpha}_{j}Y_{i+j}^{\pi}+ \sum\limits_{j=1}^{m} \widetilde{\gamma
}_{j}hf_{i+j}^{\pi}\Big],\\
Y_{i}^{\pi}= & \mathbb{E}_{i}\Big[\sum\limits_{j=1}^{m} \alpha_{j}Y_{i+j}%
^{\pi}+ \gamma_{0}h\widetilde{f}_{i}^{\pi}+ \sum\limits_{j=1}^{m} \gamma
_{j}hf_{i+j}^{\pi}\Big],\\
Z_{i}^{\pi}= & \mathbb{E}_{i}\Big[\big(\sum\limits_{j=1}^{m} \alpha
_{j}Y_{i+1+j}^{\pi}+ \gamma_{0}h\widetilde{f}_{i+1}^{\pi}+ \sum\limits_{j=1}%
^{m} \gamma_{j}hf_{i+1+j}^{\pi}\big)\frac{\Delta W_{i}^{\top}}{h}\Big].
\end{array}
\right.  \label{2-34}%
\end{equation}

In what follows, our goal is to investigate the relation of the parameters
$\alpha_{j}$ and $\gamma_{j}$ under the conditions of stability and high order
rate of convergence. This is necessary for the reason that we cannot implement
the scheme (\ref{2-32}) to calculate BSDEs if the parameters $\alpha_{j}$ and
$\gamma_{j}$ are not known. Combined (\ref{2-12}), (\ref{2-14}) with
It\^{o}-Taylor expansion, the local truncation error $\widetilde{T}_{i}$ of
scheme (\ref{2-32}) w.r.t. $Y$ is, for $k=m$
\begin{align}
\widetilde{T}_{i}= & \mathbb{E}_{i}\Big[u(t_{i},X_{i}^{\pi})-\sum_{\ell=1}%
^{m}\alpha_{\ell}u(t_{i+\ell},X_{i+\ell}^{\pi})-h\sum_{\ell=0}^{m}\gamma
_{\ell}f_{i+\ell}^{\pi}\Big]\nonumber\\
& =\mathbb{E}_{i}\Big[u(t_{i},X_{i}^{\pi})-\sum_{\ell=1}^{m}\alpha_{\ell
}u(t_{i+\ell},X_{i+\ell}^{\pi})+h\sum_{\ell=0}^{m}\gamma_{\ell}u^{(0)}%
(t_{i+\ell},X_{i+\ell}^{\pi})\Big]\nonumber\\
& =\mathbb{E}_{i}\Big[u(t_{i},X_{i}^{\pi})(1-\sum_{\ell=1}^{m}\alpha_{\ell
})+hu^{(0)}(t_{i},X_{i}^{\pi})(-\sum_{\ell=1}^{m}\ell\alpha_{\ell}+\sum
_{\ell=0}^{m}\gamma_{\ell})\nonumber\\
& ~~+h^{2}u^{(0,0)}(t_{i},X_{i}^{\pi})(-\frac{1}{2}\sum_{\ell=1}^{m}\ell
^{2}\alpha_{\ell}+\sum_{\ell=1}^{m}\ell\gamma_{\ell})+h^{3}u^{(0,0,0)}%
(t_{i},X_{i}^{\pi})\big(-\frac{1}{6}\sum_{\ell=1}^{m}\ell^{3}\alpha_{\ell
}+\frac{1}{2}\sum_{\ell=1}^{m}\ell^{2}\gamma_{\ell}\big)\nonumber\\
& ~~+\cdots\nonumber\\
& ~~+h^{m}u^{(0)_{m}}(t_{i},X_{i}^{\pi})\big(-\frac{1}{m!}\sum_{\ell=1}%
^{m}\ell^{m}\alpha_{\ell}+\frac{1}{(m-1)!}\sum_{\ell=1}^{m}\ell^{m-1}%
\gamma_{\ell}\big)\Big]+O(h^{m+1}). \label{2-35}%
\end{align}
Set
\begin{equation}
\left\{
\begin{array}
[c]{rl}%
C_{0}= & 1-\sum\limits_{\ell=1}^{m}\alpha_{\ell},\\
C_{1}= & -\sum\limits_{\ell=1}^{m}\ell\alpha_{\ell}+\sum_{\ell=0}^{m}%
\gamma_{\ell},\\
C_{2}= & -\frac{1}{2}\sum\limits_{\ell=1}^{m}\ell^{2}\alpha_{\ell}%
+\sum\limits_{\ell=1}^{m}\ell\gamma_{\ell},\\
\cdots & \\
C_{m}= & -\frac{1}{m!}\sum\limits_{\ell=1}^{m}\ell^{m}\alpha_{\ell}+\frac
{1}{(m-1)!}\sum\limits_{\ell=1}^{m}\ell^{m-1}\gamma_{\ell}.
\end{array}
\right.  \label{2-36}%
\end{equation}
If $C_{0}=C_{1}=C_{2}=\cdots=C_{m}=0$ and $C_{m+1}\neq0$, then the local
truncation error accuracy of scheme (\ref{2-32}) reaches $m$-order.

\begin{remark}
The (\ref{2-36}) implies that we could obtain a family of schemes reaching
$m$-order because the number of unknowns are greater than those of equations.
This is the main difference from the scheme (\ref{2-15}). Moreover, it
indicates that the scheme (\ref{2-15}) is a special form of the scheme of
(\ref{2-32}).
\end{remark}

\subsection{Error estimates of the scheme (\ref{2-32})}

In this subsection, we concentrate on exploring the stability and high order
accuracy of the scheme (\ref{2-32}). Before demonstrating them, we first
present a necessary property, a lemma and two definitions.

\begin{proposition}
\label{pro-pc} Assume that $f_{i}^{\pi}$ is smooth enough and $\widetilde{k}=
k +1$ in scheme (\ref{2-15}). For $i< N-k, k\in[0,N)$, it follows that
\begin{equation}
|u(t_{i},X^{\pi}_{i})- Y_{i}^{\pi}| = |\frac{C_{k+2}}{C_{k+2}-\widetilde{C}%
_{k+2}}||\widetilde{Y}^{\pi}_{i} - Y^{\pi}_{i} |, \label{3-1}%
\end{equation}
where $\widetilde{C}_{k+2}$ denotes the error constant for the predictor
$(k+1)$-order term and $C_{k+2}$ denotes the error constant for $(k+1)$-order
corrector term.
\end{proposition}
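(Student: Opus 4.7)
The plan is to adapt the classical Milne device for Adams predictor--corrector pairs to the BSDE setting. The key observation is that the local truncation error expansions of the explicit predictor and the implicit corrector share the same leading-order shape $h^{k+2} u^{(0)_{k+1}}(t_i,X_i^\pi)$; their leading coefficients $\widetilde C_{k+2}$ and $C_{k+2}$ differ, and subtracting the two expansions lets one express the leading error in the corrector as a constant multiple of the \emph{observable} quantity $\widetilde Y_i^\pi - Y_i^\pi$.

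First I would apply the computation already carried out in (\ref{2-13})--(\ref{2-14}) to the corrector step of (\ref{2-15}) with $\widetilde k = k+1$. Under the standing hypothesis that no prior errors have been introduced at the $i$-th step and because the corrector coefficients $\{\beta_\ell\}_{0\le\ell\le k+1}$ are designed so that $C_0=\cdots=C_{k+1}=0$, this yields
\begin{equation}
u(t_i,X_i^\pi)-Y_i^\pi = C_{k+2}\, h^{k+2}\, u^{(0)_{k+1}}(t_i,X_i^\pi) + O(h^{k+3}). \nonumber
\end{equation}
Second, I would repeat verbatim the same It\^o--Taylor expansion on the explicit predictor (\ref{2-3}) with $\widetilde k = k+1$, replacing $\beta_\ell$ by $\widetilde\beta_\ell$ and $\beta_0$ by $0$; by construction the $\widetilde\beta_\ell$ annihilate the first $k+1$ error coefficients, and one obtains
\begin{equation}
u(t_i,X_i^\pi)-\widetilde Y_i^\pi = \widetilde C_{k+2}\, h^{k+2}\, u^{(0)_{k+1}}(t_i,X_i^\pi) + O(h^{k+3}). \nonumber
\end{equation}
Subtracting the two identities gives
\begin{equation}
\widetilde Y_i^\pi - Y_i^\pi = (C_{k+2}-\widetilde C_{k+2})\, h^{k+2}\, u^{(0)_{k+1}}(t_i,X_i^\pi) + O(h^{k+3}), \nonumber
\end{equation}
and solving for the unknown leading term $h^{k+2} u^{(0)_{k+1}}$ and substituting back into the corrector expansion produces (\ref{3-1}); the stated equality is understood, as is customary for the Milne device, up to a remainder of order $O(h^{k+3})$ relative to both sides.

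The main technical obstacle I anticipate is that the corrector in (\ref{2-15}) evaluates $\widetilde f_i^\pi = f(t_i,X_i^\pi,\widetilde Y_i^\pi,Z_i^\pi)$ rather than the genuinely implicit value $f_i^\pi = f(t_i,X_i^\pi,Y_i^\pi,Z_i^\pi)$, so the expansion in (\ref{2-13})--(\ref{2-14}) must be adjusted for the extra term $h\beta_0 \mathbb{E}_i[\widetilde f_i^\pi - f_i^\pi]$. Controlling this term requires the Lipschitz property in Assumption 2(i) together with the predictor bound $\widetilde Y_i^\pi - u(t_i,X_i^\pi) = O(h^{k+2})$ from the previous step, which gives a perturbation of size $h\cdot O(h^{k+2}) = O(h^{k+3})$ and hence is absorbed harmlessly into the remainder. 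Everything else is bookkeeping of the leading terms in the It\^o--Taylor expansion, for which the moment identities in Lemma 2.2 suffice.
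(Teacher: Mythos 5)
Your proposal is correct and follows essentially the same route as the paper: the Milne-device argument that expands the predictor and corrector errors with their respective error constants $\widetilde C_{k+2}$ and $C_{k+2}$ against the common leading term, subtracts to solve for that term, and substitutes back into the corrector expansion (the paper likewise treats the identity as holding after neglecting higher-order remainders). The only slips are cosmetic --- the leading factor should be $u^{(0)_{k+2}}(t_i,X_i^\pi)$ rather than $u^{(0)_{k+1}}$, matching (\ref{2-14}) --- and your extra care with the term $h\beta_0\mathbb{E}_i[\widetilde f_i^\pi-f_i^\pi]$ is a point the paper passes over silently.
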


\begin{proof}
It is straightforward that there exist two approximations to the exact
solution $u(t_{i},X^{\pi}_{i})$ in every step in scheme (\ref{2-15}).
Moreover, the predictor term and the corrector term possess different error
constants even though both of them have the same order. Thus, the error in the
predictor term is equal to
\begin{equation}
u(t_{i},X^{\pi}_{i})= \widetilde{Y}^{\pi}_{i} + h^{k+2}\widetilde{C}%
_{k+2}u^{(0)_{k+2}}(t_{i},X^{\pi}_{i}) + o(h^{k+2}). \label{3-2}%
\end{equation}
Similarly, we can obtain the error of the corrector term at the time step $i$
\begin{equation}
u(t_{i},X^{\pi}_{i})= Y^{\pi}_{i} + h^{k+2}C_{k+2}u^{(0)_{k+2}}(t_{i},X^{\pi
}_{i}) + o(h^{k+2}). \label{3-3}%
\end{equation}
Subtracting (\ref{3-2}) from (\ref{3-3}) and ignoring higher order term, one
has
\begin{equation}
u^{(0)_{k+2}}(t_{i},X^{\pi}_{i}) = \frac{1}{h^{k+2}(C_{k+2}-\widetilde{C}%
_{k+2})}(\widetilde{Y}^{\pi}_{i} - Y^{\pi}_{i}). \label{3-4}%
\end{equation}
Plugging (\ref{3-4}) into (\ref{3-3}) and neglecting higher order term, we
obtain
\begin{equation}
u(t_{i},X^{\pi}_{i})- Y_{i}^{\pi}= \frac{C_{k+2}}{C_{k+2}-\widetilde{C}_{k+2}%
}(\widetilde{Y}^{\pi}_{i} - Y^{\pi}_{i} ).\nonumber
\end{equation}
The proof is completed.
\end{proof}

Next, we provide a lemma and two definitions which will be used to deduce the
stability and high order accuracy of the scheme (\ref{2-32}),

\begin{lemma}
(see Lemma 3 in \cite{39}) \label{est-1} Suppose that $N$ and $K$ are two
nonnegative integers with $N\ge K$ and $h$ any positive number. Let
$\{\eta_{i}\}$ be a series satisfying
\begin{equation}
|\eta_{i}|\le\beta+\alpha h\sum_{j=i+1}^{N}|\eta_{j}|,\quad i =
N-K,N-K-1,\cdots,0,\nonumber
\end{equation}
where $\alpha$ and $\beta$ are two positive constants. Let $M_{0} =
\max\limits_{N-K\le j\le N}|\eta_{j}|$ and $T = Nh$; then
\begin{equation}
|\eta_{i}|\le exp(\alpha T)(\beta+\alpha KhM_{0}),\quad i = N-K,N-K-1,\cdots
,0.\nonumber
\end{equation}

\end{lemma}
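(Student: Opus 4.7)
The inequality is a discrete Gronwall lemma with a twist: the sum on the right extends all the way up to $N$, but for indices $j > N-K$ the values $|\eta_j|$ are not controlled by the recursion (these are the "initial data" bounded a priori by $M_0$). So the plan is to split the sum at $N-K$ and absorb the upper part into a constant.

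Concretely, for any $i$ with $0 \le i \le N-K$, I would write
\[
|\eta_i| \le \beta + \alpha h\!\!\sum_{j=N-K+1}^{N}\!\!|\eta_j| + \alpha h\!\!\sum_{j=i+1}^{N-K}\!\!|\eta_j| \le B + \alpha h\!\!\sum_{j=i+1}^{N-K}\!\!|\eta_j|,
\]
where $B := \beta + \alpha K h M_0$ and the sum $\sum_{j=i+1}^{N-K}$ is understood as empty when $i = N-K$ (in which case $|\eta_{N-K}|\le B$ is the base case). The problem is thus reduced to a purely backward recursion on the index set $\{0,1,\ldots,N-K\}$.

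Next, I would prove by backward induction on $i$ that $|\eta_i|\le B(1+\alpha h)^{N-K-i}$. The base case $i=N-K$ is immediate. For the inductive step, substituting the hypothesis into the split inequality,
\[
|\eta_i| \le B + \alpha h \sum_{j=i+1}^{N-K} B(1+\alpha h)^{N-K-j} = B\Bigl[1 + \alpha h \cdot \frac{(1+\alpha h)^{N-K-i}-1}{\alpha h}\Bigr] = B(1+\alpha h)^{N-K-i},
\]
using the geometric sum identity. Finally, since $(1+\alpha h)^{N-K-i} \le (1+\alpha h)^{N} \le \exp(\alpha h N) = \exp(\alpha T)$, one obtains
\[
|\eta_i| \le \exp(\alpha T)(\beta + \alpha K h M_0),
\]
which is the claimed bound.

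There is no real obstacle here; the only point that requires attention is the correct splitting at $j=N-K$ so that the recursive and the a priori contributions are cleanly separated before applying the standard discrete Gronwall induction. Once the splitting is done, the geometric series telescopes exactly and the inequality $(1+x)\le e^x$ gives the exponential factor.
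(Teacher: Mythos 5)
Your proof is correct. Note that the paper itself gives no proof of this lemma; it simply cites Lemma 3 of \cite{39}, and your argument --- peeling off the $K$ uncontrolled terms $j=N-K+1,\dots,N$ into the constant $B=\beta+\alpha KhM_{0}$, running a backward induction with the geometric sum to get $|\eta_{i}|\le B(1+\alpha h)^{N-K-i}$, and finishing with $(1+\alpha h)^{N}\le e^{\alpha hN}=e^{\alpha T}$ --- is exactly the standard discrete backward Gronwall argument used there, so it correctly supplies the omitted proof.
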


\begin{definition}
The characteristic polynomials of (\ref{2-21}) are given by
\begin{equation}
P(\zeta) = \zeta^{m}- \alpha_{1}\zeta^{m-1} - \alpha_{2}\zeta^{m-2}%
-\cdots-\alpha_{m}. \label{4-1}%
\end{equation}
The equation (\ref{2-21}) is said to fulfil Dahlquist's root condition, if

i) The roots of $P(\zeta)$ lie on or within the unit circle;

ii) The roots on the unit circle are simple.
\end{definition}

\begin{definition}
Let $(Y_{i}^{\pi},Z_{i}^{\pi}),i= 0,1,\cdots,N-m$ be the time-discretization
approximate solution given by the scheme (\ref{2-32}) and $(\bar{Y}_{i}^{\pi
},\bar{Z}_{i}^{\pi})$ is the solution of its perturbed form (see (\ref{4-4})
). Then the scheme (\ref{2-32}) is said to be $\mathbb{L}_{2}$-stable if
\begin{equation}
\max\limits_{0\le i \le N-m}\mathbb{E}[|Y_{i}^{\pi}-\bar{Y}_{i}^{\pi}|^{2}] +
\sum\limits_{i=0}^{N-m}h \mathbb{E}[|Z_{i}^{\pi}-\bar{Z}_{i}^{\pi}|^{2}] \le C
\left( \max\limits_{N-m+1\le k \le N}|Y_{k}^{\pi}-\bar{Y}_{k}^{\pi}|^{2}
+\sum_{i=0}^{N-m}\mathbb{E}_{i}\big[h|\varepsilon_{i}^{Z}|^{2}+\frac{1}%
{h}|\varepsilon_{i}^{Y}|^{2}\big]\right) , \label{4-6}%
\end{equation}
where $C$ is a constant; $(\bar{Y}_{i}^{\pi},\bar{Z}_{i}^{\pi})$ satisfies a
perturbed form of (\ref{2-32}) for $i = N-m,N-m-1,\cdots,0$
\begin{equation}
\left\{
\begin{array}
[c]{rl}%
\bar{\widetilde{Y}}_{i}^{\pi}= & \mathbb{E}_{i}\Big[\sum\limits_{j=1}^{m}
\widetilde{\alpha}_{j}\bar{Y}_{i+j}^{\pi}+ \sum\limits_{j=1}^{m}
\widetilde{\gamma}_{j}hf(t_{i+j},X^{\pi}_{i+j},\bar{Y}_{i+j}^{\pi},\bar
{Z}_{i+j}^{\pi})\Big],\\
\bar{Y}_{i}^{\pi}= & \mathbb{E}_{i}\Big[\sum\limits_{j=1}^{m} \alpha_{j}%
\bar{Y}_{i+j}^{\pi}+ \gamma_{0}hf(t_{i+j},X^{\pi}_{i+j},\bar{\widetilde{Y}%
}_{i}^{\pi},\bar{Z}_{i}^{\pi}) + \sum\limits_{j=1}^{m} \gamma_{j}%
hf(t_{i+j},X^{\pi}_{i+j},\bar{Y}_{i+j}^{\pi},\bar{Z}_{i+j}^{\pi}%
)\Big]+\varepsilon_{i}^{Y},\\
\bar{Z}_{i}^{\pi}= & \mathbb{E}_{i}\big[\sum\limits_{j=1}^{m}\lambda_{m,j}%
\bar{Y}_{i+j}^{\pi}(W_{i+j}-W_{i})^{\top}\big]+\varepsilon_{i}^{Z}.
\end{array}
\right.  \label{4-4}%
\end{equation}
Sequences $\varepsilon_{i}^{Y}$ and $\varepsilon_{i}^{Z}$ which belong to
$\mathbb{L}_{2}(\mathcal{F}_{i})$ are random variables.
\end{definition}

Note that we are merely interested in the solution of the BSDE in (\ref{2-1}).
Therefore, we assume that the solution of SDE in (\ref{2-1}) can be obtained
perfectly. Thus, we do not consider the error caused by $X_{t}$ (see \cite{38}).

\begin{theorem}
Suppose \textbf{Assumption 2 (i)} and \textbf{Assumption 2 (iii)} hold. Then
the stochastic multi-step method is numerically stable if and only if its
characteristic polynomial (\ref{4-1}) satisfies Dahlquist's root condition.
\end{theorem}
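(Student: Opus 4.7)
The plan is to prove both directions by analyzing the linearized error equations obtained from subtracting (\ref{2-32}) and its perturbed form (\ref{4-4}). Set $\delta Y_i := Y_i^{\pi}-\bar Y_i^{\pi}$, $\delta\widetilde Y_i:=\widetilde Y_i^{\pi}-\bar{\widetilde Y}_i^{\pi}$, and $\delta Z_i := Z_i^{\pi}-\bar Z_i^{\pi}$; by Assumption~2(i), the driver differences $\delta f_{i+j}$ and $\delta\widetilde f_i$ are bounded by $L_f(|\delta Y_{\cdot}|+|\delta Z_{\cdot}|)$. The error system is then a linear stochastic multi-step recursion with the same coefficients $\{\alpha_j\},\{\gamma_j\},\{\widetilde\alpha_j\},\{\widetilde\gamma_j\}$, driven by $\varepsilon_i^Y,\varepsilon_i^Z$ and by Lipschitz $h$-scale terms, so $\mathbb{L}_2$-stability reduces to whether the homogeneous recursion governed by $P(\zeta)$ admits uniformly bounded fundamental solutions.

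For sufficiency (root condition implies stability), rewrite the corrector difference as $\delta Y_i - \sum_{j=1}^m\alpha_j\mathbb{E}_i[\delta Y_{i+j}] = \gamma_0 h\mathbb{E}_i[\delta\widetilde f_i]+h\sum_{j=1}^m\gamma_j\mathbb{E}_i[\delta f_{i+j}]+\varepsilon_i^Y$ and view it as a backward linear difference equation in $i$. Dahlquist's root condition on $P(\zeta)$ guarantees that the companion matrix of the homogeneous part has powers bounded uniformly in $N$, the classical ingredient of Dahlquist's equivalence theorem. Taking $\mathbb{L}_2$-norms, invoking Jensen and Young, using $|\delta f|\le L_f(|\delta Y|+|\delta Z|)$, and estimating the explicit predictor via $\mathbb{E}|\delta\widetilde Y_i|^2 \le C\max_{1\le j\le m}\mathbb{E}|\delta Y_{i+j}|^2+Ch^2\sum_j(\mathbb{E}|\delta Y_{i+j}|^2+\mathbb{E}|\delta Z_{i+j}|^2)$, one obtains $\mathbb{E}|\delta Y_i|^2 \le C\max_{N-m+1\le k\le N}\mathbb{E}|\delta Y_k|^2 + Ch\sum_{j>i}(\mathbb{E}|\delta Y_j|^2+\mathbb{E}|\delta Z_j|^2)+C\mathbb{E}|\varepsilon_i^Y|^2/h$. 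For $\delta Z_i$, the explicit formula, conditional Cauchy-Schwarz, and $\mathbb{E}|W_{i+j}-W_i|^2=jh$ give $h\mathbb{E}|\delta Z_i|^2\le C\sum_{j=1}^m\mathbb{E}|\delta Y_{i+j}|^2+Ch\mathbb{E}|\varepsilon_i^Z|^2$. Substituting, choosing $h$ small enough to absorb the self-coefficient on $\mathbb{E}|\delta Y_i|^2$, and applying Lemma~\ref{est-1} yields (\ref{4-6}).

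For necessity (stability implies root condition), I argue by contrapositive. Take the degenerate FBSDE with $f\equiv 0$ and $\Phi\equiv 0$, for which $(Y_i^{\pi},Z_i^{\pi})\equiv(0,0)$ solves (\ref{2-32}). With $\varepsilon_i^Y=\varepsilon_i^Z\equiv 0$, the perturbed system (\ref{4-4}) collapses to the deterministic scalar recurrence $\bar Y_i^{\pi}=\sum_{j=1}^m\alpha_j\bar Y_{i+j}^{\pi}$ with $\bar Z_i^{\pi}\equiv 0$. If $P$ has a root $\zeta_0$ with $|\zeta_0|>1$, seed $\bar Y_{N-j}^{\pi}:=\zeta_0^j$ for $j=0,\ldots,m-1$; the recursion yields $\bar Y_i^{\pi}=\zeta_0^{N-i}$, so $|\bar Y_0^{\pi}|^2=|\zeta_0|^{2N}\to\infty$ as $N\to\infty$, while $\max_{N-m+1\le k\le N}|\bar Y_k^{\pi}|^2\le|\zeta_0|^{2(m-1)}$ stays bounded, contradicting (\ref{4-6}). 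A double root $\zeta_0$ on the unit circle is handled analogously by seeding with $\bar Y_{N-j}^{\pi}:=j\zeta_0^j$ to force linear-in-$N$ growth of $|\bar Y_0^{\pi}|$.

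The main obstacle is the sufficiency direction: the predictor couples $\delta\widetilde Y$ into the corrector through $\delta\widetilde f_i$, and the $Z$-recursion carries Brownian-increment factors whose conditional second moments scale as $h$, threatening $1/\sqrt h$ losses when reinserted into the $Y$-estimate. Balancing these losses against the $\mathbb{E}|\varepsilon_i^Y|^2/h$ term in (\ref{4-6}), while exploiting the uniform companion-matrix bound supplied by Dahlquist's root condition so that no $N$-dependent constant creeps in, and then closing the estimates via Lemma~\ref{est-1}, is the delicate technical step; the remaining manipulations follow standard discrete BSDE analysis.
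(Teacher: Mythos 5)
Your sufficiency argument is essentially the paper's own proof: the authors likewise recast the error recursion with the companion matrix $A$ of $P(\zeta)$, use Dahlquist's root condition to produce a similarity transform $\mathcal{D}$ with $\|\mathcal{D}^{-1}A\mathcal{D}\|_2\le 1$ (your ``uniformly bounded powers''), bound the predictor coupling and the driver by the Lipschitz constant $L_f$, estimate $\Delta Z_i$ via conditional Cauchy--Schwarz against the Brownian increments (note the second moment is $jhd$, not $jh$, in dimension $d$ --- harmless), and close with Lemma~\ref{est-1}; your observation that the $h$--$1/h$ weighting in (\ref{4-6}) is exactly what absorbs the $\lambda_{m,j}=O(1/h)$ factors is the same balancing the paper performs in Step~3. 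Where you genuinely diverge is the necessity direction: the paper omits it entirely, deferring to Theorem~6.3.3 of \cite{16}, whereas you give the explicit contrapositive --- take $f\equiv 0$, $\Phi\equiv 0$ (admissible under Assumption~2), kill the perturbations $\varepsilon^Y,\varepsilon^Z$, and seed the terminal values with $\zeta_0^j$ (or $j\zeta_0^j$ for a multiple unimodular root) so that the homogeneous recursion $\bar Y_i=\sum_j\alpha_j\bar Y_{i+j}$ produces $|\bar Y_0|$ growing geometrically (resp.\ linearly) in $N$ against a bounded terminal perturbation, violating (\ref{4-6}) for any uniform $C$. That construction is correct and is a self-contained replacement for the paper's citation, so your proof is, if anything, more complete than the one in the paper on that half.
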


\begin{proof}
\textbf{Sufficiency:} Let $\Delta Y_{i}=Y_{i}^{\pi}-\bar{Y}_{i}^{\pi},\Delta
Z_{i}=Z_{i}^{\pi}-\bar{Z}_{i}^{\pi},\Delta f_{i}= f(t_{i},X_{i}^{\pi}%
,Y_{i}^{\pi},Z_{i}^{\pi})-f(t_{i},X_{i}^{\pi},\bar{Y}_{i}^{\pi},\bar{Z}%
_{i}^{\pi})$ for $i=N-m,N-m-1,\cdots,0.$ We complete the proof of the theorem
in three steps.\newline\textbf{step 1.} From (\ref{2-32}) and (\ref{4-4})
w.r.t. $Y$, one obtains
\[
\Delta Y_{i} = \mathbb{E}_{i}\Big[\sum_{j=1}^{m} \alpha_{j}\Delta Y_{i+j} +
\gamma_{0}h\Delta\widehat{f}_{i} + \sum_{j=1}^{m} \gamma_{j}h\Delta
f_{i+j}\Big]- \varepsilon_{i}^{Y},
\]
where $\Delta\widehat{f}_{i}=f(t_{i},X_{i}^{\pi},\widetilde{Y}_{i}^{\pi}%
,Z_{i}^{\pi})-f(t_{i},X^{\pi}_{i},\bar{\widetilde{Y}}_{i}^{\pi},\bar{Z}%
_{i}^{\pi})$. We rearrange the $m$-step recursion to a one-step recursion as
follow
\begin{equation}%
\begin{array}
[c]{rl}%
\mathbb{E}_{i}[\mathcal{Y}_{i}] = & \mathbb{E}_{i}[A\mathcal{Y}_{i+1} + F_{i}
+ R_{i} ],
\end{array}
\label{4-7}%
\end{equation}
where
\[
\mathcal{Y}_{i}=%
\begin{pmatrix}
\Delta Y_{i}\\
\Delta Y_{i+1}\\
\vdots\\
\Delta Y_{i+m-1}%
\end{pmatrix}
, A=%
\begin{pmatrix}
\alpha_{1} & \alpha_{2} & \cdots & \alpha_{m}\\
1 & 0 &  & \\
& \ddots & \ddots & \\
&  & 1 & 0
\end{pmatrix}
, F_{i}=%
\begin{pmatrix}
\gamma_{0}h\Delta\widehat{f}_{i}+\sum\limits_{j=1}^{m} \gamma_{j}h\Delta
f_{i+j}\\
0\\
\vdots\\
0
\end{pmatrix}
, R_{i}=
\begin{pmatrix}
-\varepsilon_{i}^{Y}\\
0\\
\vdots\\
0
\end{pmatrix}
.
\]
To ensure the stability of the $m$-step scheme, the norm of the matrix $A$ in
the equation (\ref{4-7}) is no more than 1 (see \cite{23}, Chapter III.4,
Lemma 4.4). This can be satisfied if the eigenvalues $eig(A)$ of the matrix
$A$ make $|eig(A)|\le1$ and in which the eigenvalues are simple if
$|eig(A)|=1$. In addition, the eigenvalues of $A$ satisfy the root condition
by Definition 3.4. By the Dahlquist's root condition, it is possible that
there exists a non-singular matrix $\mathcal{D}$ such that $||\mathcal{D}%
^{-1}A\mathcal{D}||_{2}\le1$ where $||\cdot||_{2}$ denotes the spectral matrix
norm induced by Euclidian vector norm in $\mathbb{R}^{m\times n}$. Hence, we
can choose a scalar product for $\bar{A},\widetilde{A }\in\mathbb{R}^{m\times
n}$ as $\langle\bar{A},\widetilde{A}\rangle_{*}:=\langle\mathcal{D}^{-1}%
\bar{A},\mathcal{D}^{-1}\widetilde{A}\rangle= \bar{A}^{\top}(\mathcal{D}%
^{-1})^{\top}\mathcal{D}^{-1}\widetilde{A}$. And we have $|\bar{A}|_{*}%
^{2}:=\langle\bar{A},\bar{A}\rangle_{*}$ with the induced vector norm on
$\mathbb{R}^{m\times n}$. Let $||A||_{*}=||\mathcal{D}^{-1}A\mathcal{D}||_{2}$
be the induced matrix norm. Owing to the norm equivalence, we know that there
exist positive constants $c_{1},c_{2}$ such that
\begin{equation}
c_{1}|\bar{A}|_{2}^{2}\le|\bar{A}|_{*}^{2}\le c_{2}|\bar{A}|_{2}^{2},
~~\forall\bar{A}\in\mathbb{R}^{m\times n} \label{4-8}%
\end{equation}
where $|\bar{A}|_{2}^{2} = \sum\limits_{j=1,2,\cdots,m}|a_{j}|^{2}$ for
$\bar{A}=(a_{1}^{\top},\cdots,a_{m}^{\top})^{\top}$. Applying $|\cdot|_{\ast}$
to the equation (\ref{4-7}), we have
\begin{equation}%
\begin{array}
[c]{rl}%
|\mathbb{E}_{i}[\mathcal{Y}_{i}]|_{\ast}= & \Big|\mathbb{E}_{i}[A\mathcal{Y}%
_{i+1}+F_{i}+R_{i}]\Big|_{\ast}\\
= & ||A||_{\ast}|\mathbb{E}_{i}[\mathcal{Y}_{i+1}]|_{\ast}+|\mathbb{E}%
_{i}[F_{i}]|_{\ast}+|\mathbb{E}_{i}[R_{i}]|_{\ast}\\
\leq & |\mathbb{E}_{i}[\mathcal{Y}_{i+1}]|_{\ast}+ \mathbb{E}_{i}%
[F_{i}]|_{\ast}+|\mathbb{E}_{i}[R_{i}]|_{\ast}.
\end{array}
\label{4-9}%
\end{equation}
Squaring the above (\ref{4-9}), then from the inequality $(\sum\limits_{i=1}%
^{n}a_{i})^{2}\leq n\sum\limits_{i=1}^{n}a_{i}^{2}$ and (\ref{4-8}), one
deduces
\begin{equation}%
\begin{array}
[c]{rl}%
|\mathbb{E}_{i}[\mathcal{Y}_{i}]|_{\ast}^{2}\leq & 3|\mathbb{E}_{i}%
[\mathcal{Y}_{i+1}]|_{\ast}^{2}+3|\mathbb{E}_{i}[F_{i}]|_{\ast}^{2}%
+3|\mathbb{E}_{i}[R_{i}]|_{\ast}^{2}\\
\leq & 3|\mathbb{E}_{i}[\mathcal{Y}_{i+1}]|_{\ast}^{2}+3c_{2} |\mathbb{E}%
_{i}[\gamma_{0}h\Delta\widehat{f}_{i}+\sum\limits_{j=1}^{m}\gamma_{j}h\Delta
f_{i+j}]|^{2}+3c_{2}|\mathbb{E}_{i}[\varepsilon_{i}^{Y}]|^{2}\\
\leq & 3|\mathbb{E}_{i}[\mathcal{Y}_{i+1}]|_{\ast}^{2}+3(m+1)h^{2}%
c_{2}\Big(|\mathbb{E}_{i}[\gamma_{0}\Delta\widehat{f}_{i}]|^{2}+\sum
\limits_{j=1}^{m}|\mathbb{E}_{i}[\gamma_{j}\Delta f_{i+j}]|^{2}\Big) +3c_{2}%
|\mathbb{E}_{i}[\varepsilon_{i}^{Y}]|^{2}.
\end{array}
\label{4-10}%
\end{equation}
By the Lipschitz condition of $f$ with respect to $(y,z)$ and
\[
\widetilde{Y}_{k}^{\pi}-\bar{\widetilde{Y}}_{k}^{\pi}=\mathbb{E}_{k}%
\Big[\sum_{j=1}^{m}\widetilde{\alpha}_{j}\Delta Y_{k+j}+\sum_{j=1}%
^{m}\widetilde{\gamma}_{j}h\Delta f_{k+j}\Big],
\]
(\ref{4-10}) can be restated as
\begin{align}
|\mathbb{E}_{i}[\mathcal{Y}_{i}]|_{\ast}^{2}\leq &  3|\mathbb{E}%
_{i}[\mathcal{Y}_{i+1}]|_{\ast}^{2}+6(m+1)h^{2}c_{2}L_{f}^{2} \Big(|\mathbb{E}%
_{i}[\gamma_{0}(\widetilde{Y}_{i}^{\pi}-\bar{\widetilde{Y}}_{i}^{\pi}%
)]|^{2}+\sum\limits_{j=1}^{m}|\mathbb{E}_{i}[\gamma_{j}\Delta Y_{i+j}%
]|^{2}\nonumber\\
&  +\sum\limits_{j=0}^{m}|\mathbb{E}_{i}[\gamma_{j}\Delta Z_{i+j}%
]|^{2}\Big)+3c_{2}|\mathbb{E}_{i}[\varepsilon_{i}^{Y}]|^{2}\nonumber\\
\leq &  3|\mathbb{E}_{i}[\mathcal{Y}_{i+1}]|_{\ast}^{2}+6(m+1)h^{2}c_{2}%
L_{f}^{2}\Big(\sum\limits_{j=1}^{m}(4m^{2}\gamma_{0}^{2}\widetilde{\alpha}%
_{j}^{2}+8m^{2}\gamma_{0}^{2}L_{f}^{2}h^{2}\widetilde{\gamma}_{j}^{2}%
+\gamma_{j}^{2})|\mathbb{E}_{i}[\Delta Y_{i+j}]|^{2}\nonumber\\
&  +\sum\limits_{j=0}^{m}(8m^{2}\gamma_{0}^{2}L_{f}^{2}h^{2}\widetilde{\gamma
}_{j}^{2}+\gamma_{j}^{2})|\mathbb{E}_{i}[\Delta Z_{i+j}]|^{2}\Big)+3c_{2}%
|\mathbb{E}_{i}[\varepsilon_{i}^{Y}]|^{2}\nonumber\\
\leq &  3|\mathbb{E}_{i}[\mathcal{Y}_{i+1}]|_{\ast}^{2}+6(m+1)h^{2}\frac
{c_{2}}{c_{1}}L_{f}^{2}\max\limits_{1\leq j\leq m}\{4m^{2}\gamma_{0}%
^{2}\widetilde{\alpha}_{j}^{2}+8m^{2}\gamma_{0}^{2}L_{f}^{2}h^{2}%
\widetilde{\gamma}_{j}^{2}+\gamma_{j}^{2}\}|\mathbb{E}_{i}[\mathcal{Y}%
_{i+1}]|_{\ast}^{2}\nonumber\\
&  +6(m+1)h^{2}c_{2}L_{f}^{2}\sum\limits_{j=0}^{m}(8m^{2}\gamma_{0}^{2}%
L_{f}^{2}h^{2}\widetilde{\gamma}_{j}^{2}+\gamma_{j}^{2})|\mathbb{E}_{i}[\Delta
Z_{i+j}]|^{2}+3c_{2} |\mathbb{E}_{i}[\varepsilon_{i}^{Y}]|^{2}. \label{4-11}%
\end{align}
\textbf{step 2.} Subtracting (\ref{4-4}) from (\ref{2-32}) with respect to
$Z$, we obtain
\begin{equation}
\Delta Z_{i}=\mathbb{E}_{i}\big[\sum_{j=1}^{m}\lambda_{m,j}\Delta
Y_{i+j}(W_{i+j}-W_{i})^{\top}\big]-\varepsilon_{i}^{Z}. \label{4-12}%
\end{equation}
Moreover, we get
\begin{equation}%
\begin{array}
[c]{rl}%
|\Delta Z_{i}|= & \Big|\mathbb{E}_{i}\big[\sum\limits_{j=1}^{m}\lambda
_{m,j}\Delta Y_{i+j}(W_{i+j}-W_{i})^{\top}\big]-\varepsilon_{i}^{Z}\Big|\\
\leq & \sum\limits_{j=1}^{m}\Big|\lambda_{m,j}\mathbb{E}_{i}\big[\Delta
Y_{i+j}(W_{i+j}-W_{i})^{\top}\big]\Big|+\big|\varepsilon_{i}^{Z}\big|.
\end{array}
\label{4-13}%
\end{equation}
Squaring the above equation (\ref{4-13}) and then by the Cauchy-Schwarz
inequality, we have
\begin{equation}%
\begin{array}
[c]{rl}%
|\Delta Z_{i}|^{2}\leq & (m+1)\sum\limits_{j=1}^{m}\max\limits_{1\leq j\leq
m}\{\lambda_{m,j}^{2}\}\Big|\mathbb{E}_{i}\big[\Delta Y_{i+j}(W_{i+j}%
-W_{i})^{\top}\big]\Big|^{2}+(m+1)\big|\varepsilon_{i}^{Z}\big|^{2}\\
\leq & (m+1)mdh\sum\limits_{j=1}^{m}\max\limits_{1\leq j\leq m}\{\lambda
_{m,j}^{2}\}\big|\mathbb{E}_{i}\big[\Delta Y_{i+j}\big]\big|^{2}%
+(m+1)\big|\varepsilon_{i}^{Z}\big|^{2}.
\end{array}
\label{4-14}%
\end{equation}
Summing over the above inequality from $i$ to $N-m$ and taking expectation, we
have
\begin{equation}%
\begin{array}
[c]{rl}%
\sum\limits_{k=i}^{N-m}h\mathbb{E}_{i}\big[|\Delta Z_{k}|^{2}\big]\leq &
(m+1)mdh^{2}\sum\limits_{k=i}^{N-m}\sum\limits_{j=1}^{m}\max\limits_{1\leq
j\leq m}\{\lambda_{m,j}^{2}\}\big|\mathbb{E}_{i}\big[\Delta Y_{i+j}%
\big]\big|^{2}+(m+1)h\sum\limits_{k=i}^{N-m}\mathbb{E}_{i}\big[|\varepsilon
_{k}^{Z}|^{2}\big]\\
\leq & (m+1)mdh^{2}\max\limits_{1\leq j\leq m}\{\lambda_{m,j}^{2}%
\}\sum\limits_{k=i+1}^{N-m+1}\mathbb{E}_{i}\big[|\mathcal{Y}_{k}%
|^{2}\big]+(m+1)h\sum\limits_{k=i}^{N-m}\mathbb{E}_{i}\big[|\varepsilon
_{k}^{Z}|^{2}\big].
\end{array}
\label{4-15}%
\end{equation}
\textbf{step 3.} Inserting (\ref{4-14}) into (\ref{4-11}), we obtain
\begin{align}
|\mathbb{E}_{i}[\mathcal{Y}_{i}]|_{\ast}^{2}\leq &  3|\mathbb{E}%
_{i}[\mathcal{Y}_{i+1}]|_{\ast}^{2}+6(m+1)h^{2}\frac{c_{2}}{c_{1}}L_{f}%
^{2}\max\limits_{1\leq j\leq m}\{4m^{2}\gamma_{0}^{2}\widetilde{\alpha}%
_{j}^{2}+8m^{2}\gamma_{0}^{2}L_{f}^{2}h^{2}\widetilde{\gamma}_{j}^{2}%
+\gamma_{j}^{2}\}|\mathbb{E}_{i}[\mathcal{Y}_{i+1}]|_{\ast}^{2}\nonumber\\
&  +6(m+1)^{2}mdh^{3}\frac{c_{2}}{c_{1}}L_{f}^{2}\max\limits_{1\leq j\leq
m}\{\lambda_{m,j}^{2}\}\max\limits_{1\leq j\leq m}\{8m^{2}\gamma_{0}^{2}%
L_{f}^{2}h^{2}\widetilde{\gamma}_{j}^{2}+\gamma_{j}^{2}\}\sum\limits_{j=0}%
^{m}\mathbb{E}_{i}\big[|\mathcal{Y}_{i+1+j}|_{\ast}^{2}\big]\nonumber\\
&  +6(m+1)^{2}h^{2}c_{2}L_{f}^{2}\sum\limits_{j=0} ^{m}(8m^{2}\gamma_{0}%
^{2}L_{f}^{2}h^{2}\widetilde{\gamma}_{j}^{2}+\gamma_{j}^{2})\mathbb{E}%
_{i}\big[|\varepsilon_{i+j}^{Z}|^{2}\big]+3c_{2}|\mathbb{E}_{i}[\varepsilon
_{i}^{Y}]|^{2}\nonumber\\
\leq &  3|\mathbb{E}_{i}[\mathcal{Y}_{i+1}]|_{\ast}^{2}+6(m+1)h^{2}\frac
{c_{2}}{c_{1}}L_{f}^{2}\Big(\max\limits_{1\leq j\leq m}\{4m^{2}\gamma_{0}%
^{2}\widetilde{\alpha}_{j}^{2}+8m^{2}\gamma_{0}^{2}L_{f}^{2}h^{2}%
\widetilde{\gamma}_{j}^{2}+\gamma_{j}^{2}\}\nonumber\\
&  +(m+1)^{2}mdh\max\limits_{1\leq j\leq m}\{\lambda_{m,j}^{2}\}\max
\limits_{1\leq j\leq m}\{8m^{2}\gamma_{0}^{2}L_{f}^{2}h^{2}\widetilde{\gamma
}_{j}^{2}+\gamma_{j}^{2}\}\Big)\sum_{k=i+1}^{N-m+1}|\mathbb{E}_{i}%
[\mathcal{Y}_{k}]|_{\ast}^{2}\nonumber\\
&  +6(m+1)^{2}h^{2}c_{2}L_{f}^{2}\max\limits_{1\leq j\leq m}\{8m^{2}\gamma
_{0}^{2}L_{f}^{2}h^{2}\widetilde{\gamma}_{j}^{2}+\gamma_{j}^{2}\}
\sum\limits_{j=0}^{m}\mathbb{E}_{i}\big[|\varepsilon_{i+j}^{Z}|^{2}%
\big]+3c_{2}|\mathbb{E}_{i}[\varepsilon_{i}^{Y}]|^{2}. \label{4-16}%
\end{align}
There exists a constant $C$ which changes from line to line such that
\begin{equation}
|\mathbb{E}_{i}[\mathcal{Y}_{i}]|_{\ast}^{2}\leq C\Big((h+h^{2})\sum
_{k=i+1}^{N-m+1}|\mathbb{E}_{i}[\mathcal{Y}_{k}]|_{\ast}^{2}+\sum_{k=0}%
^{m}\mathbb{E}_{i}\big[|\varepsilon_{k}^{Y}|^{2}+h^{2}|\varepsilon_{i+k}%
^{Z}|^{2}\big]\Big). \label{4-17}%
\end{equation}
From Lemma \ref{est-1}, we have
\begin{equation}
|\mathbb{E}_{i}[\mathcal{Y}_{i}]|_{\ast}^{2}\leq C\Big( \max\limits_{i+1\le k
\le N-m+1}mh|\mathbb{E}_{i}[\mathcal{Y}_{k}]|_{\ast}^{2}+ \sum_{k=0}%
^{m}\mathbb{E}_{i}\big[|\varepsilon_{k}^{Y}|^{2}+h^{2}|\varepsilon_{i+k}%
^{Z}|^{2}\big]\Big). \label{4-18}%
\end{equation}
Inserting (\ref{4-18}) into (\ref{4-15}), we get, for $h$ small enough
\begin{equation}%
\begin{array}
[c]{l}%
\sum\limits_{k=i}^{N-m}h\mathbb{E}_{i}\big[ |\Delta Z_{k}|^{2}\big] \leq
C\left( \max\limits_{i+1\le k \le N}|\mathbb{E}[Y_{k}]|^{2} +\sum
\limits_{k=i}^{N-m}\mathbb{E}_{i}\big[h|\varepsilon_{k}^{Z} |^{2}+\frac{1}%
{h}|\varepsilon_{k}^{Y}|^{2}\big]\right) .
\end{array}
\label{4-19}%
\end{equation}
Adding (\ref{4-18}) to the above (\ref{4-19}), we derive that there exists a
constant $C$ such that
\[
\max_{0\leq i\leq N-m}|\mathbb{E}[\mathcal{Y}_{i}]|_{\ast}^{2}+\sum
_{i=0}^{N-m}h\mathbb{E}\big[|\Delta Z_{i}|^{2}\big]\leq C \left(
\max\limits_{N-m+1\le k \le N}|Y_{k}^{\pi}-\bar{Y}_{k}^{\pi}|^{2} +\sum
_{i=0}^{N-m}\mathbb{E}_{i}\big[h|\varepsilon_{i}^{Z}|^{2}+\frac{1}%
{h}|\varepsilon_{i}^{Y}|^{2}\big]\right) .
\]
\textbf{Necessity:} The proof is analogous to ordinary differential equations
(see Theorem 6.3.3 of \cite{16}). So we omit it.
\end{proof}

\begin{theorem}
Suppose that Assumption 2 holds. Furthermore, $f(t,x,y,z)$ and $\Phi(x_{T})$
are smooth enough functions. Let $(Y_{t_{i}},Z_{t_{i}})$ and $(Y_{i}^{\pi
},Z_{i}^{\pi})$ be solutions of the BSDE in (\ref{2-1}) and solutions of the
scheme (\ref{2-32}) respectively. The terminal values satisfy $\mathbb{E}%
[\sup\limits_{N-m< i\le N}|Y_{i}^{\pi}-Y_{t_{i}}|^{2} + h|Z_{i}^{\pi}
-Z_{t_{i}}|^{2}]^{\frac{1}{2}}\le Ch^{m+1}$. Then, as $h$ is small enough
\begin{equation}
\mathbb{E}[\sup_{0\le i\le N-m}|Y_{i}^{\pi}-Y_{t_{i}}|^{2} + h|Z_{i}^{\pi
}-Z_{t_{i}}|^{2}]^{\frac{1}{2}}\le Ch^{m+1},\nonumber
\end{equation}
where $C$ is a constant changing from line to line.
\end{theorem}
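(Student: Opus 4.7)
The overall strategy is the classical stability-plus-consistency pattern: view the exact solution $(Y_{t_i}, Z_{t_i})$ as a perturbed numerical solution by identifying it with $(\bar Y_i^\pi, \bar Z_i^\pi)$ in the perturbed scheme (\ref{4-4}), read off the residuals $\varepsilon_i^Y$ and $\varepsilon_i^Z$ produced when the exact solution is substituted into (\ref{2-32}), estimate them as local truncation errors, and then invoke Theorem 3.6 to convert these into a global error bound. Write $e_i^Y := Y_i^\pi - Y_{t_i}$ and $e_i^Z := Z_i^\pi - Z_{t_i}$.

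First I would carry out the consistency analysis. By Lemma 2.1, $Y_{t_i} = u(t_i, X_{t_i})$ with $u$ smooth enough under the strengthened hypotheses on $f$ and $\Phi$. Inserting the exact solution into the corrector line of (\ref{2-32}), using Lemma 2.2 to Taylor-expand each $\mathbb{E}_i[u(t_{i+j}, X^\pi_{i+j})]$ in powers of $h$, and using the reference ODE (\ref{2-12}) to rewrite each $\mathbb{E}_i[f^\pi_{i+j}]$ in terms of $u^{(0)}$, recovers the formal expansion (\ref{2-35}). The design conditions (\ref{2-36}) zero out $C_0, \ldots, C_m$, yielding the sharp bound $|\varepsilon_i^Y| \le C h^{m+2}$. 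For the $Z$-line, the residual is precisely the remainder $R_{Z,i}$ in (\ref{2-29}) generated by the derivative approximation (\ref{2-28}), and the conditions (\ref{2-27}) on the coefficients $\{\lambda_{m,j}\}$ give $|\varepsilon_i^Z| \le C h^{m+1}$.

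Next, Theorem 3.6 applies because Assumption 2 is in force and the Dahlquist root condition is built into the scheme's specification; it delivers
\begin{equation*}
\max_{0 \le i \le N-m} \mathbb{E}[|e_i^Y|^2] + \sum_{i=0}^{N-m} h \,\mathbb{E}[|e_i^Z|^2] \le C \Bigl( \max_{N-m < k \le N} |e_k^Y|^2 + \sum_{i=0}^{N-m} \mathbb{E}\bigl[ h|\varepsilon_i^Z|^2 + \tfrac{1}{h}|\varepsilon_i^Y|^2 \bigr] \Bigr).
\end{equation*}
Plugging in the consistency bounds and using $N = T/h$, both $\sum_i h|\varepsilon_i^Z|^2$ and $\sum_i h^{-1}|\varepsilon_i^Y|^2$ are $O(h^{2m+2})$; combined with the hypothesized terminal accuracy $O(h^{2(m+1)})$, the right-hand side is $O(h^{2(m+1)})$, and taking square roots yields the claim.

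The delicate point is the sharp consistency bound $|\varepsilon_i^Y| = O(h^{m+2})$; naively matching only the $C_k$ coefficients in (\ref{2-35}) produces $O(h^{m+1})$, which would cost one full order in the final rate. Extracting the extra factor of $h$ requires pushing the Itô--Taylor remainder of Lemma 2.2 one term beyond the matched coefficients and tracking the explicit $h$ multiplier already present in front of the $\gamma$-terms of the scheme. A closely related subtlety is the replacement of $f(t_i,X_i^\pi,Y_{t_i},Z_{t_i})$ by $f(t_i,X_i^\pi,\widetilde Y^\pi_i,Z_i^\pi)$ in the $\gamma_0$-term of the corrector; the resulting secondary perturbation is absorbed via Lipschitz continuity of $f$ together with Proposition \ref{pro-pc}, which pins the predictor--corrector gap $|\widetilde Y^\pi_i - Y^\pi_i|$ to the same order as the global $Y$-error, so this contribution also stays within the $h^{m+2}$ budget.
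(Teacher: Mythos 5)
Your overall strategy---treat the exact solution as a perturbed copy of (\ref{2-32}), read off the residuals, and feed them into the stability estimate---is reasonable in outline, but it hinges on a consistency bound that the scheme does not provide, and this is precisely the step you flag as ``delicate''. The corrector residual obtained by inserting the exact solution is exactly the local truncation error expanded in (\ref{2-35}): under the design conditions (\ref{2-36}) one has $C_{0}=\cdots=C_{m}=0$ but $C_{m+1}\neq 0$, so $\varepsilon_{i}^{Y}=C_{m+1}h^{m+1}\mathbb{E}_{i}[u^{(0)_{m+1}}(t_{i},X_{i}^{\pi})]+O(h^{m+2})$, which is genuinely of size $h^{m+1}$ and not $h^{m+2}$. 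Your proposed remedy (pushing the It\^{o}--Taylor expansion one term beyond the matched coefficients and ``tracking the explicit $h$ in front of the $\gamma$-terms'') is exactly what (\ref{2-35}) already does; that extra $h$ is accounted for there and the $C_{m+1}h^{m+1}$ term does not go away. The predictor substitution, which you handle by Lipschitz continuity and Proposition \ref{pro-pc}, only perturbs at order $h^{m+2}$, so that part is fine, but it cannot remove the leading term. Likewise your claimed $|\varepsilon_{i}^{Z}|\le Ch^{m+1}$ is stronger than what the derivative approximation (\ref{2-27})--(\ref{2-29}) gives; the paper only uses $|R_{Z,i}|\le Ch^{m}$ (via Lemma 2.1 of \cite{40}). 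With the true bounds $|\varepsilon_{i}^{Y}|=O(h^{m+1})$ and $|\varepsilon_{i}^{Z}|=O(h^{m})$, the right-hand side of the stability inequality (\ref{4-6}) contains $\sum_{i=0}^{N-m}\frac{1}{h}|\varepsilon_{i}^{Y}|^{2}\sim (T/h)\,h^{-1}h^{2m+2}=Th^{2m}$, so your route delivers only the rate $h^{m}$, one full order short of the statement.

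The paper does not pass through the stability theorem at all. It writes the exact solution into the scheme with residuals $R_{Y,i},R_{Z,i}$ (its (\ref{3-7})) and runs a direct recursion for $\Delta\mathcal{Y}_{i}=Y_{i}^{\pi}-Y_{t_{i}}$ and $\Delta\mathcal{Z}_{i}$ in (\ref{3-10})--(\ref{3-14}), closed by the discrete Gronwall-type Lemma \ref{est-1}. In that recursion the residuals enter per time step and are never summed against a factor $1/h$: the bound (\ref{3-14-1}) involves $|R_{Y,i}|^{2}+h^{2}\max_{j}R_{Z,j}^{2}$, so the inputs $|R_{Y,i}|\le Ch^{m+1}$ (argued via Proposition \ref{pro-pc}) and $|R_{Z,i}|\le Ch^{m}$ already yield $|\Delta\mathcal{Y}_{i}|^{2}\le Ch^{2m+2}$ and then $h|\Delta\mathcal{Z}_{i}|^{2}\le Ch^{2m+2}$, i.e.\ the claimed $h^{m+1}$ rate without ever needing an $O(h^{m+2})$ consistency estimate. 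To repair your argument you would have to either reproduce such a per-step (non-accumulating) error recursion, or prove a strengthened stability estimate in which the $Y$-perturbations enter without the $\frac{1}{h}\sum_{i}$ weighting; as written, the appeal to the stability theorem with the asserted $O(h^{m+2})$ bound is a genuine gap.
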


\begin{proof}
The BSDE in (\ref{2-1}) is discretized by the scheme as below:
\begin{equation}
\left\{
\begin{array}
[c]{rl}%
\widetilde{Y}_{i}= & \mathbb{E}_{i}\Big[\sum\limits_{j=1}^{m}
\widetilde{\alpha}_{j}Y_{t_{i+j}}+ \sum\limits_{j=1}^{m} \widetilde{\gamma
}_{j}hf_{i+j}\Big],\\
Y_{t_{i}}= & \mathbb{E}_{i}\Big[\sum\limits_{j=1}^{m} \alpha_{j}Y_{t_{i+j}}+
\gamma_{0}h\widetilde{f}_{i}+ \sum\limits_{j=1}^{m} \gamma_{j}hf_{i+j}%
\Big]+R_{Y,i},\\
Z_{t_{i}}= & \mathbb{E}_{i}\big[\sum\limits_{j=1}^{m}\lambda_{m,j}Y_{t_{i+j}%
}(W_{i+j}-W_{i})^{\top}\big]+R_{Z,i},
\end{array}
\right.  \label{3-7}%
\end{equation}
where $R_{Y,i}$ and $R_{Z,i}$ denote the error of the exact solutions and the
approximation solutions w.r.t. $Y$ and $Z$; $f_{i}=f(t_{i},X_{t_{i}},Y_{t_{i}%
},Z_{t_{i}})$, $\widetilde{f}_{i}=f(t_{i},X_{t_{i}},\widetilde{Y}_{t_{i}%
},Z_{t_{i}})$, $i=0,1,\cdots,N$. Set $\Delta\mathcal{Y}_{i}=Y_{i}^{\pi
}-Y_{t_{i}},\Delta\widetilde{\mathcal{Y}}_{i}=\widetilde{Y}_{i}^{\pi
}-\widetilde{Y}_{t_{i}},\Delta\mathcal{Z}_{i}=Z_{i}^{\pi}-Z_{t_{i}}$. From
(\ref{2-32}) and (\ref{3-7}), we have
\begin{equation}%
\begin{array}
[c]{rl}%
|\Delta\mathcal{Y}_{i}|= & \Big|\mathbb{E}_{i}\big[\sum\limits_{j=1}^{m}%
\alpha_{j}\Delta\mathcal{Y}_{i+j}+h \gamma_{0}(\widetilde{f}_{i}^{\pi
}-\widetilde{f}_{i})+h \sum\limits_{j=1}^{m}\gamma_{j}(f_{i+j}^{\pi}%
-f_{i+j})\big]-R_{Y,i}\Big|\\
\leq & \mathbb{E}_{i}\big[\sum\limits_{j=1}^{m}|\alpha_{j}||\Delta
\mathcal{Y}_{i+j}|+h|\gamma_{0}||\widetilde{f}_{i}^{\pi}-\widetilde{f}%
_{i}|+h\sum\limits_{j=1}^{m}|\gamma_{j}||f_{i+j}^{\pi}-f_{i+j}|\big]+|R_{Y,i}%
|\\
\leq & \mathbb{E}_{i}\big[\sum\limits_{j=1}^{m}|\alpha_{j}||\Delta
\mathcal{Y}_{i+j}|+h|\gamma_{0} L_{f}||\Delta\widetilde{\mathcal{Y}}%
_{i}+\Delta\mathcal{Z}_{i}| +h\sum\limits_{j=1}^{m}|\gamma_{j}L_{f}%
||\Delta\mathcal{Y}_{i+j}+\Delta\mathcal{Z}_{i+j}|\big]+|R_{Y,i}|.
\end{array}
\label{3-10}%
\end{equation}
\begin{equation}%
\begin{array}
[c]{rl}%
|\Delta\widetilde{\mathcal{Y}}_{i}| = |\widetilde{Y}_{i}^{\pi}- \widetilde{Y}%
_{t_{i}}| = & \big|\mathbb{E}_{i}[\sum\limits_{j=1}^{m}\widetilde{\alpha}%
_{j}\Delta\mathcal{Y}_{i+j} +h\sum\limits_{j=1}^{m}\widetilde{\gamma}%
_{j}(f_{i+j}^{\pi}- f_{i+j})]\big|\\
\le & \mathbb{E}_{i}[\sum\limits_{j=1}^{m}|\widetilde{\alpha}_{j}%
||\Delta\mathcal{Y}_{i+j}| +h\sum\limits_{j=1}^{m}|\widetilde{\gamma}_{j}%
L_{f}||\Delta\mathcal{Y}_{i+j} + \Delta\mathcal{Z}_{i+j}|].
\end{array}
\label{3-11}%
\end{equation}

\begin{equation}
\Delta\mathcal{Z}_{i} = \sum\limits_{j=1}^{m}\lambda_{m,j}\mathbb{E}%
_{i}\big[\Delta\mathcal{Y}_{i+j} (W_{i+j}-W_{i})^{\top}\big]-R_{Z,i}.
\label{3-12}%
\end{equation}
Inserting (\ref{3-11}) into (\ref{3-10}), we obtain
\begin{equation}%
\begin{array}
[c]{rl}%
|\Delta\mathcal{Y}_{i}| \le & \mathbb{E}_{i}\big[\sum\limits_{j=1}^{m}%
|\alpha_{j}||\Delta\mathcal{Y}_{i+j}| + h |\gamma_{0}L_{f}||\mathbb{E}%
_{i}[\sum\limits_{j=1}^{m}|\widetilde{\alpha}_{j}||\Delta\mathcal{Y}_{i+j}|
+h\sum\limits_{j=1}^{m}|\widetilde{\gamma}_{j}L_{f}||\Delta\mathcal{Y}_{i+j} +
\Delta\mathcal{Z}_{i+j}|]\\
& + \Delta\mathcal{Z}_{i}|+ h\sum\limits_{j=1}^{m}|\gamma_{j}L_{f}%
||\Delta\mathcal{Y}_{i+j} + \Delta\mathcal{Z}_{i+j}|\big]+ | R_{Y,i}|\\
\le & \mathbb{E}_{i}\big[\sum\limits_{j=1}^{m}(|\alpha_{j}|+hL_{f} |\gamma
_{0}\widetilde{\alpha}_{j}| )|\Delta\mathcal{Y}_{i+j}| + h |\gamma_{0}%
L_{f}||\Delta\mathcal{Z}_{i}|\\
& +\sum\limits_{j=1}^{m}\left( h^{2}L_{f}^{2}|\gamma_{0}\widetilde{\gamma}%
_{j}| +hL_{f}|\gamma_{j}|\right) |\Delta\mathcal{Y}_{i+j} + \Delta
\mathcal{Z}_{i+j}|\Big]+ |R_{Y,i}|.
\end{array}
\label{3-13}%
\end{equation}
Squaring the inequality (\ref{3-13}) and inserting (\ref{3-12}) into the
derived equation yield, for $i = N-m,N-m-1,\cdots,0$
\begin{align}
|\Delta\mathcal{Y}_{i}|^{2} \leq &  4\bigg(\mathbb{E}_{i}\Big[\sum
\limits_{j=1}^{m}m(|\alpha_{j}|+hL_{f} |\gamma_{0}\widetilde{\alpha}_{j}|
)^{2}|\Delta\mathcal{Y}_{i+j}|^{2} + h^{2}L_{f}^{2}\gamma_{0}^{2}
|\sum\limits_{j=1}^{m}\lambda_{m,j}\mathbb{E}_{i}\big[\Delta\mathcal{Y}_{i+j}
(W_{i+j}-W_{i})^{\top}\big]-R_{Z,i}|^{2}\nonumber\\
& +\sum\limits_{j=1}^{m}2m\left( h^{2}L_{f}^{2}|\gamma_{0}\widetilde{\gamma
}_{j}| +hL_{f}|\gamma_{j}|\right) ^{2}\Big(|\Delta\mathcal{Y}_{i+j}%
|^{2}\nonumber\\
&  +|\sum\limits_{n=1}^{m}\lambda_{m,n}\mathbb{E}_{i+j}\big[\Delta
\mathcal{Y}_{i+j+n} (W_{i+j+n}-W_{i+j})^{\top}\big]-R_{Z,i+j}|^{2}%
\Big) +|R_{Y,i}|^{2}\bigg)\nonumber\\
\leq &  4\bigg(\mathbb{E}_{i}\Big[\Big(m(|\alpha_{j}|+hL_{f} |\gamma
_{0}\widetilde{\alpha}_{j}| )^{2} + hm(m+1)L_{f}^{2}d\gamma_{0}^{2}%
(\max\limits_{1\le j\le m}\lambda_{m,j}h)^{2} + 4mh^{2}L_{f}^{2}(h^{2}%
L_{f}^{2}|\gamma_{0}\widetilde{\gamma}_{j}|^{2}+|\gamma_{j}|^{2})\nonumber\\
& +4m^{2}(m+1)dhL_{f}^{2}(\max\limits_{1\le j\le m}\lambda_{m,j}h)^{2}%
(h^{2}L_{f}^{2}|\gamma_{0}\widetilde{\gamma}_{j}|^{2}+|\gamma_{j}%
|^{2})\Big) \sum\limits_{j=1}^{m}|\Delta\mathcal{Y}_{i+j}|^{2} \Big]+R_{Y,i}%
^{2}\nonumber\\
& +\Big(h^{2}(m+1)L_{f}^{2}\gamma_{0}^{2} + 4m(m+1)h^{2}L_{f}^{2}(h^{2}%
L_{f}^{2}|\gamma_{0}\widetilde{\gamma}_{j}|^{2}+|\gamma_{j}|^{2}%
)\Big) \max\limits_{0\le j\le m}R_{Z,i+j}^{2}\bigg)\nonumber\\
\leq &  C\bigg(\mathbb{E}_{i}\Big[(h+h^{2})\sum\limits_{j=i+1}^{N}%
|\Delta\mathcal{Y}_{j}|^{2}\Big] +|R_{Y,i}|^{2}+h^{2}\max\limits_{i+1\le j\le
N}R_{Z,j}^{2} +O(h^{2m+2})\bigg). \label{3-14}%
\end{align}
From Lemma \ref{est-1}, the inequality (\ref{3-14}) can be rewritten as
\begin{equation}%
\begin{array}
[c]{rl}%
|\Delta\mathcal{Y}_{i}|^{2}\leq & C\bigg(mh\max\limits_{N-m\le j \le N}%
|\Delta\mathcal{Y}_{j}|^{2} +|R_{Y,i}|^{2}+h^{2}\max\limits_{i+1\le j\le
N}R_{Z,j}^{2} +O(h^{2m+2})\bigg).
\end{array}
\label{3-14-1}%
\end{equation}
From Proposition \ref{pro-pc}, we derive
\begin{equation}
|R_{Y,i}|\le Ch^{m+1}.\label{errrorofY}%
\end{equation}
By Lemma 2.1 in \cite{40}, we have
\begin{equation}
|R_{Z,i}|\le Ch^{m}.\label{errrorofZ}%
\end{equation}
Combining (\ref{3-14-1}), (\ref{errrorofY}) with (\ref{errrorofZ}), we deduce
that
\begin{equation}
|\Delta\mathcal{Y}_{i}|^{2}\leq C(h^{2m+3} + h^{2m+2}),\label{3-15}%
\end{equation}
for $i = N-m-1,N-m-2,\cdots,0$ recursively. Hence, $\sup\limits_{0\leq i<
N-m}|\Delta\mathcal{Y}_{i}|\leq Ch^{m+1}$.

Squaring (\ref{3-12}) and multiplying $h$ and then with the help of
Cauchy-Schwarz inequality, we have
\begin{align}
h|\Delta\mathcal{Z}_{i}|^{2}=  & h|\sum\limits_{j=1}^{m}\lambda_{m,j}%
\mathbb{E}_{i}\big[\Delta\mathcal{Y}_{i+j} (W_{i+j}-W_{i})^{\top}%
\big]-R_{Z,i}|^{2}\nonumber\\
\le & h(m+1)\left( \sum\limits_{j=1}^{m}\lambda_{m,j}^{2}mhd\mathbb{E}%
_{i}[|\Delta\mathcal{Y}_{i+j}|^{2}]+|R_{Z,i}|^{2}\right) \nonumber\\
\le & C\left( \max\limits_{i+1\le j\le N}\mathbb{E}_{i}[|\Delta\mathcal{Y}%
_{j}|^{2}]+h|R_{Z,i}|^{2}\right) \nonumber\\
\leq & Ch^{2m+2}.\label{3-16}%
\end{align}
Hence, we deduce the conclusion with the help of (\ref{3-15}) and
(\ref{3-16}). The proof is completed.
\end{proof}

\section{Numerical Experiments}

In this section, we provide two numerical examples to show the performance of
the scheme (\ref{2-32}). Specifically, in the \textbf{Example 1}, we provide
stable numerical schemes for the step number $m=1,2,3,4$ to show their
convergence rates w.r.t. the time step sizes, absolute errors and running
times. And the comparisons with explicit Adams method in \cite{9} are also
given. In the \textbf{Example 2}, we also present unstable numerical schemes
for the step number $m=2,3$ to illustrate the previous theory analysis.

To assess the performance of our algorithms, we had better to find a BSDE with
closed-form solutions and establish criterions. Let $\epsilon=\mathbb{E}%
[|Y_{0}-Y_{0}^{\pi}|]$ denote the error between closed-form solutions and
numerical solutions. From the Central Limit Theorem, one gets the error
$\bar{\epsilon}:=\frac{1}{M}\sum\limits_{k=1}^{M}|Y_{0}-Y_{0,k}^{\pi}|$ that
converges in distribution to $\epsilon$ as $M\rightarrow\infty$.

In implementation, one can calculate the variance $\hat{\sigma}_{\epsilon}%
^{2}$ of $\hat{\epsilon}$ and then utilize it to construct a confidence
interval (CI) for the absolute error $\epsilon$. To realize this idea, one
arranges the simulations into $\widetilde{M}$ batches of $M$ simulations each
and estimates the variance $\hat{\sigma}_{\epsilon}^{2}$. To be precise,
define the average errors $\hat{\epsilon}_{j} = \frac{1}{M}\sum\limits_{k=1}%
^{M}|Y_{0}- Y_{0,k,j}^{\pi}|,j =1,2,\cdots,\widetilde{M}$, where
$Y_{0,k,j}^{\pi}$ is $k$-th trajectory generated by our schemes in the $j$th
batch at time $0$. These average errors are independent and approximately
Gaussian when $M$ is large enough. Thus, the mean of the batch averages is
$\hat{\epsilon}=\frac{1}{\widetilde{M}}\sum\limits_{j=1}^{\widetilde{M}}%
\hat{\epsilon}_{j} = \frac{1}{M\widetilde{M}}\sum\limits_{j=1}^{\widetilde{M}%
}\sum\limits_{k=1}^{M}|Y_{0}- Y_{0}^{\pi}|$ and the variance of the batch
averages is $\hat{\sigma}_{\epsilon}^{2} = \frac{1}{\widetilde{M}-1}%
\sum\limits_{j=1}^{\widetilde{M}} (\hat{\epsilon}_{j}-\hat{\epsilon} )^{2}$.
Experience has shown that the batch averages can be interpreted as being
Gaussian for batch sizes $\widetilde{M}\ge15$. A $1-\alpha$ confidence
interval for $\epsilon$ has the form $(\hat{\epsilon}-t_{1-\alpha
,\widetilde{M}-1}\sqrt{\frac{\hat{\sigma}_{\epsilon}^{2}}{\widetilde{M}}},
\hat{\epsilon}+ t_{1-\alpha,\widetilde{M}-1}\sqrt{\frac{\hat{\sigma}%
_{\epsilon}^{2}}{\widetilde{M}}})$ where $t_{1-\alpha,\widetilde{M}-1}$ is
determined from $t$-distribution with $\widetilde{M}-1$ degrees of freedom.

Next, algorithms are founded via our schemes, and the emerged conditional
expectations in our schemes are simulated by means of least squares Monte
Carlo method (see \cite{4,18,20,21}). Let $OLS$ denote the ordinary least
squares. Define the empirical probability measure $\nu_{i,M}=\frac{1}{M}%
\sum\limits_{\hat{m}=1}^{M}\delta_{(\Delta W_{i}^{(i,\hat{m})},X_{i}%
^{(i,\hat{m})},\cdots,X_{N}^{(i,\hat{m})})}$ where $\delta_{x}$ is the Dirac
measure and $\{(\Delta W_{i}^{(i,\hat{m})},X^{(i,\hat{m})}):\hat{m}%
=1,2,\cdots,M\}$ is the independent copies of $(\Delta W_{i},X)$; the finite
functional linear space $\mathcal{K}_{Y,i}:=\{p_{Y,i}^{(1)}(\cdot
),p_{Y,i}^{(2)}(\cdot),\cdots,p_{Y,i}^{(K_{Y,i})}(\cdot)\}$, the basis
function $p_{Y,i}^{(k)}:\mathbb{R}^{d}\rightarrow\mathbb{R}$ such that
$\mathbb{E}[|p_{Y,i}^{(k)}(X_{i})|^{2}]<+\infty$ and the finite functional
linear space $\mathcal{K}_{Z,i}:=\{p_{Z,i}^{(1)}(\cdot),p_{Z,i}^{(2)}%
(\cdot),\cdots,p_{Z,i}^{(K_{Z,i})}(\cdot)\}$, the basis function
$p_{Z,i}^{(k)}:\mathbb{R}^{d}\rightarrow\mathbb{R}^{d}$ such that
$\mathbb{E}[|p_{Z,i}^{(k)}(X_{i})|^{2}]<+\infty$ where $K_{Y,i}$ and $K_{Z,i}$
denote the dimension of the finite functional linear spaces $\mathcal{K}%
_{Y,i}$ and $\mathcal{K}_{Z,i}$. Suppose that $\mathcal{T}_{L}(x)$ is the
truncation operator and it is defined as $\mathcal{T}_{L}(x)= (-L \vee x_{1}
\wedge L,\cdots,-L \vee x_{n} \wedge L) $ for any finite $L>0$, $x=
(x_{1},\cdots,x_{n})\in\mathbb{R}^{n}$. Note that there are measurable,
deterministic (but unknown) functions $y_{i}(\cdot):\mathbb{R}^{d}%
\rightarrow\mathbb{R}$ and $z_{i}(\cdot):\mathbb{R}^{d}\rightarrow
\mathbb{R}^{d}$ for $i=0,1,\cdots,N-1$ such that the solution $(Y_{i}^{\pi
},Z_{i}^{\pi})$ of the discrete BSDE (\ref{2-32}) is given by $(Y_{i}^{\pi
},Z_{i}^{\pi}): = (y_{i}(X_{i}^{\pi}),z_{i}(X_{i}^{\pi}))$ (see Theorem 3.1 in
\cite{CBTM10}).

\begin{table}[h]
\centering\resizebox{\textwidth}{35mm}{
\begin{tabular}
[c]{ccccccccc}\hline
\textbf{Algorithm } the stable high order predictor-corrector scheme based on
(\ref{2-32}) &  &  &  &  &  &  &  & \\\hline
\multicolumn{9}{l}{1. Initialization $X_{0}:=x_{0},y_{N}^{(M)}(\cdot):=\Phi(X_{N}^{\pi})$}\\
\multicolumn{9}{l}{2. sample $(t_{i},x_{i}^{\widehat{m}})_{1\leq i\leq N}$ by
$X_{i+1}^{\pi,\widehat{m}}=X_{i}^{\pi,\widehat{m}}+b(t_{i},X_{i}^{\pi,\widehat{m}})h+\sigma(t_{i},X_{i}^{\pi,\widehat{m}})\Delta W_{i+1}$}\\
\multicolumn{9}{l}{3. for $i=N-1$ until $1$}\\
\multicolumn{9}{l}{4. for $\widehat{m}=1$ until $M$}\\
\multicolumn{9}{l}{5. set $S_{Z,i}^{(M)}(\mathbf{W},\mathbf{X})=\sum\limits
_{j=1}^{m}\lambda_{m,j}y_{i+j}^{(M)}(X_{i+j}^{\pi,\widehat{m}})(W_{i+j}-W_{i})^{\top}$,
where $\mathbf{W} = (W_i,\cdots,W_{i+m+1})\in (\mathbb{R}^d)^{m+2},$
$\mathbf{X} = (X_i^\pi,\cdots,X^\pi_{i+m+1})\in (\mathbb{R}^d)^{m+2}$;}\\
\multicolumn{9}{l}{\quad  compute $z_{i}^{(M)}(X_{i}^{\pi})
=\mathcal{T}_{C_z}\Big(OLS\big(S_{Z,i}^{(M)},\mathcal{K}_{Z,i},\nu
_{i,M}\big)\Big)$, where $C_z$ denotes  the upper bound of Z}\\
\multicolumn{9}{l}{6. set $\widetilde{S}_{Y,i}^{(M)}(\mathbf{X})=\sum\limits
_{j=1}^{m}y_{i+j}^{(M)}(X_{i+j}^{\pi,\widehat{m}})+h\sum\limits_{j=1}^{m}\widetilde{\gamma}_{j}f(t_{i+j}, X_{i+j}^{\pi,\widehat{m}}, y_{i+j}^{(M)}(X_{i+j}^{\pi,\widehat{m}}),z_{i+j}^{(M)}(X_{i+j}^{\pi,\widehat{m}}))$, compute $\widetilde{y}_{i}^{(M)}(X_{i}^{\pi})=\mathcal{T}_{C_y}\Big(OLS\big(\widetilde{S}_{Y,i}^{(M)},\mathcal{K}_{Y,i},\nu_{i,M}\big)\Big)$}\\
\multicolumn{9}{l}{\quad  where $C_y$ denotes  the upper bound of Y}\\
\multicolumn{9}{l}{7. set $S_{Y,i}^{(M)}(\mathbf{X})=\sum\limits_{j=1}^{m}\alpha
_{j}y_{i+j}^{(M)}(X_{i+j}^{\pi,\widehat{m}})+h\gamma_{0}f(t_{i}, X_{i}^{\pi,\widehat{m}}, \widetilde{y}_{i}^{(M)}(X_{i}^{\pi,\widehat{m}}),z_{i}^{(M)}(X_{i}^{\pi,\widehat{m}}))
+h\sum\limits_{j=1}^{m}\gamma_{j}f(t_{i+j}, X_{i+j}^{\pi,\widehat{m}}, y_{i+j}^{(M)}(X_{i+j}^{\pi,\widehat{m}}),z_{i+j}^{(M)}(X_{i+j}^{\pi,\widehat{m}}))$,}\\
\multicolumn{9}{l}{\quad compute $y_{i}^{(M)}(X_{i}^{\pi})=\mathcal{T}
_{C_y}\Big(OLS\big(S_{Y,i}^{(M)},\mathcal{K}_{Y,i},\nu_{i,M}\big)\Big)$}\\
\multicolumn{9}{l}{8. end for }\\
\multicolumn{9}{l}{9. end for }\\\hline
\end{tabular}}\end{table}

In what follows, we apply our $\mathbf{Algorithms}$ to two BSDEs with
closed-form solutions.

\textbf{Example 1.} Consider the BSDE as below:
\begin{equation}
Y_{t} = 1+\eta+sin(\tau\mathbf{1}_{d}^{\top}W_{T}) +\int_{t}^{T}%
\min\big\{1,(Y_{s}-\eta-1-\frac{sin(\tau\mathbf{1}_{d}^{\top}W_{s})}{\exp
(\tau^{2}d(T-t)/2)})^{2}\big\}ds - \int_{t}^{T}Z_{s}dW_{s},\label{BSDEdetail}%
\end{equation}
which appears in \cite{EGPT17} and is used to illustrate the variance
reduction problem with closed-form solutions. Here $\eta>0$; $\tau>0$;
$\mathbf{1}_{d}$ is a $d$-dimensional vector with components all 1. Now, the
solution to the above BSDE is
\[
Y_{t}=1+\eta+\frac{sin(\tau\mathbf{1}_{d}^{\top}W_{t})}{\exp(\tau
^{2}d(T-t)/2)},~~~~~~(Z_{t})_{\lambda}=\frac{\tau cos(\tau\mathbf{1}_{d}%
^{\top}W_{t})}{\exp(\tau^{2}d(T-t)/2)},
\]
where $(Z_{t})_{\lambda}$ is the $\lambda$th component of the $d$-dimensional
function $Z_{t}\in\mathbb{R}^{d}$. Take $T=1,\eta=0.6,\tau=\frac{1}{\sqrt{d}},
d=2,\widetilde{M}=21,h= \frac{T}{N}$. The basis functions which are spanned by
polynomials whose degree is $2$ are applied to compute the value of
$Y_{i}^{(M)}$ and $Z_{i}^{(M)}$. In the Tables, the notations CR and RT
represent the convergence rate w.r.t. the time step sizes and the running time
respectively. The unit of RT is the second. In the Figures, the notations GPC
scheme and EAM scheme represent the scheme (\ref{2-32}) and the usual explicit
Adams methods from \cite{9} respectively.

If we want to implement the \textbf{Algorithm }, we have to determine
parameters. Specifically, the following equations should be satisfied for
$m=1$%
\[
\left\{
\begin{array}
[c]{l}%
0=1-\widetilde{\alpha}_{1},\\
0=-\widetilde{\alpha}_{1}+\widetilde{\gamma}_{1},\\
|\widetilde{\alpha}_{1}|\leq1,
\end{array}
\right.  \left\{
\begin{array}
[c]{l}%
0=1-\alpha_{1},\\
0=-\alpha_{1}+\gamma_{0}+\gamma_{1},\\
|\alpha_{1}|\leq1,
\end{array}
\right.  and\left\{
\begin{array}
[c]{l}%
0=\lambda_{1,0}h+\lambda_{1,1}h,\\
1=\lambda_{1,1}h.
\end{array}
\right.
\]
Thus, $\widetilde{\alpha}_{1}=1,\widetilde{\gamma}_{1}=1,\alpha_{1}=1.$ Let
$\gamma_{0}=\frac{1}{2}$, then $\gamma_{1}=\frac{1}{2}.$ $\lambda
_{1,0}h=-1,\lambda_{1,1}h=1.$ Now, the characteristic polynomial becomes
$P(\zeta) = \zeta- 1$. Its root $1$ fulfils Dahlquist's root condition. That
is to say, this one-step scheme is stable and given as
\begin{equation}
\left\{
\begin{array}
[c]{rl}%
\widetilde{Y}_{i}^{\pi}= & \mathbb{E}_{i}\Big[Y_{i+1}^{\pi}+ hf_{i+1}^{\pi
}\Big],\\
Y_{i}^{\pi}= & \mathbb{E}_{i}\Big[Y_{i+1}^{\pi}+\frac{1}{2}h\widetilde{f}%
_{i}^{\pi} +\frac{1}{2}hf_{i+1}^{\pi}\Big],\\
Z_{i}^{\pi}= & \mathbb{E}_{i}\big[Y_{i+1}^{\pi}\frac{(W_{i+1}-W_{i})^{\top}%
}{h}\big].
\end{array}
\right. \nonumber
\end{equation}
Analogously, we present the stable predictor-corrector type general linear
multi-step scheme for $m=2,3,4$. For example, if $m=3$, we provide the
following three-step scheme
\begin{equation}
\left\{
\begin{array}
[c]{rl}%
\widetilde{Y}_{i}^{\pi}= & \mathbb{E}_{i}\Big[\frac{1}{3}Y_{i+1}^{\pi}%
+\frac{1}{3}Y_{i+2}^{\pi}+\frac{1}{3}Y_{i+3}^{\pi} + \frac{39}{18}%
hf_{i+1}^{\pi}-\frac{2}{3}hf_{i+2}^{\pi}+\frac{1}{2}hf_{i+3}^{\pi}\Big],\\
Y_{i}^{\pi}= & \mathbb{E}_{i}\Big[\frac{1}{3}Y_{i+1}^{\pi}+\frac{1}{3}%
Y_{i+2}^{\pi}+\frac{1}{3}Y_{i+3}^{\pi} +\frac{5}{6}h\widetilde{f}_{i}^{\pi
}-\frac{1}{3}hf_{i+1}^{\pi}+\frac{11}{6}hf_{i+2}^{\pi}-\frac{1}{3}%
hf_{i+3}^{\pi}\Big],\\
Z_{i}^{\pi}= & \mathbb{E}_{i}\big[3Y_{i+1}^{\pi}\frac{(W_{i+1}-W_{i})^{\top}%
}{h} -\frac{3}{2}Y_{i+2}^{\pi}\frac{(W_{i+2}-W_{i})^{\top}}{h} + \frac{1}%
{3}Y_{i+3}^{\pi}\frac{(W_{i+3}-W_{i})^{\top}}{h}\big].
\end{array}
\right. \nonumber
\end{equation}
Now, the characteristic polynomial becomes $P(\zeta) = \zeta^{3}- \frac{1}%
{3}\zeta^{2}- \frac{1}{3}\zeta- \frac{1}{3}$. Its roots $1,- \frac{1}{3} +
\frac{1121}{2378}i, - \frac{1}{3} - \frac{1121}{2378}i$ fulfil Dahlquist's
root condition. That is to say, this three-step scheme is stable.

\begin{table}[ptb]
\caption{ Errors and convergence rates based on the \textbf{ Algorithm }}%
\centering\resizebox{\textwidth}{66mm}{
\begin{tabular}
[c]{c|cccccc|c}\hline
Step & N & M &$|Y_0-Y_{0}^{(M)}|$ & 95\%CI of Y & $|Z_0-Z_{0}^{(M)}|$ & 95\%CI of Z&RT\\\hline
\multirow{5}{*}{1} & 5 & 2778 & 1.257e-02 & (8.461e-03, 1.668e-02) & 1.279e-02 & (8.685e-03,
1.690e-02)&0.1275\\
& 10 & 5996 & 7.969e-03 & (5.449e-03, 1.049e-02) & 8.621e-03 & (5.526e-03,
1.172e-02)&0.4401\\
& 15 & 8809 & 6.877e-03 & (4.757e-03, 8.998e-03) & 7.393e-03 & (4.432e-03,
1.035e-02)&1.548\\
& 20 & 12018 & 5.276e-03 & (3.966e-03, 6.586e-03) & 6.158e-03 & (3.734e-03,
8.582e-03)&2.992\\\cline{2-8}
&  \multicolumn{2}{c}{CR}  &1.021 &  & 1.004& &\\\hline
\multirow{5}{*}{2} & 5 & 2778 & 7.960e-03 & (5.440e-03, 1.048e-02) & 8.776e-03 & (6.257e-03,
1.130e-02)&0.1779\\
& 10 & 5996 & 7.012e-04 & (4.617e-04, 1.081e-03) & 7.193e-03 & (4.672e-03,
9.713e-03)&0.9788\\
& 15 & 8809 & 6.128e-04 & (4.006e-04, 8.250e-04) & 6.328e-04 & (3.887e-04,
8.769e-04)&2.254\\
& 20 & 12018 & 4.276e-04 & (2.967e-04, 5.586e-04) & 4.450e-04 & (2.150e-04
6.749e-04)&3.795\\\cline{2-8}
&  \multicolumn{2}{c}{CR}  &1.998 &  & 2.001& &\\\hline
\multirow{5}{*}{3} & 5 & 2778 & 6.604e-04 & (4.163e-04, 9.045e-04) & 6.860e-04 & (4.737e-04,
8.982e-04)&0.2553\\
& 10 & 5996 & 6.177e-04 & (4.141e-04, 8.213e-04) & 6.397e-04 & (4.277e-04,
8.518e-04)&1.434\\
& 15 & 8809 & 5.857e-05 & (3.097e-05, 9.018e-05) & 5.596e-05 & (3.560e-05,
7.633e-05)&3.484\\
& 20 & 12018 & 3.512e-05 & (1.763e-05, 4.661e-05) & 4.164e-05 & (2.227e-05,
6.101e-05)&4.996\\\cline{2-8}
&  \multicolumn{2}{c}{CR}  &3.109 &  & 3.014& &\\\hline
\multirow{5}{*}{4} & 5 & 2778 & 6.026e-05 & (3.726e-05, 8.325e-05) & 6.456e-05 & (5.147e-05,
7.766e-05)&0.3046\\
& 10 & 5996 & 5.645e-06 & (3.708e-06, 7.582e-06) & 5.717e-05 & (4.407e-05,
7.026e-05)&1.6809\\
& 15 & 8809 & 5.163e-06 & (2.738e-06, 7.587e-06) & 5.069e-06 & (3.620e-06,
6.518e-06)&3.741\\
& 20 & 12018 & 3.001e-06 & (1.602e-06, 4.399e-06) & 3.689e-06 & (2.291e-06,
5.088e-06)&6.966\\\cline{2-8}
&  \multicolumn{2}{c}{CR}  &4.227 &  & 3.894& &\\\hline
\end{tabular}
}\end{table}

Table 2 indicates: (i) The larger time points and simulations, the smaller
error of closed-form solutions and numerical solutions no matter which-step
scheme we utilize. (ii) If the number of time points and the number of
simulations are fixed, the errors of closed-form solutions and numerical
solutions become smaller as steps become bigger. (iii) If one's aim for the
error of closed-form solutions and numerical solutions to reach given
accuracy, one cannot only increase time points and simulations but also adopt
multi-step methods, such as the scheme (\ref{2-32}). In other words, this
paper presents a stable high order method to calculate numerical solutions of BSDEs.

\begin{figure}[h]
\begin{center}
\includegraphics[
trim=0.000000in 0.000000in 0.000000in -0.398623in,
height=2.1126in,
width=6.8077in]{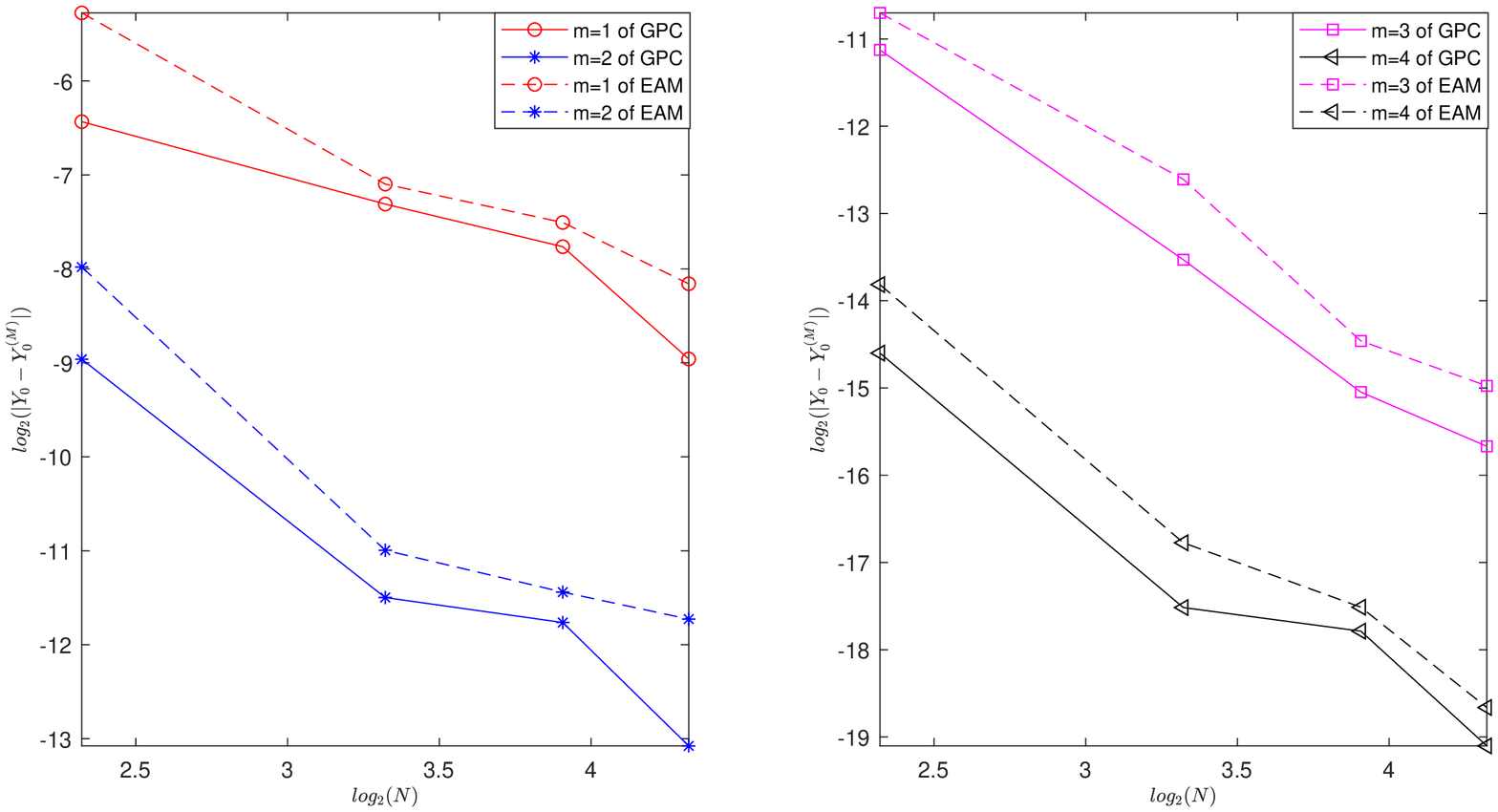}
\end{center}
\caption{The plots of $\log_{2}(|Y_{0}-Y_{0}^{(M)}|)$ versus $\log_{2}(N)$
with GPC scheme and EAM scheme, $M=3000$ }%
\label{Y-CR-RT-algoritnm1}%
\end{figure}\begin{figure}[h]
\begin{center}
\includegraphics[
trim=0.000000in 0.000000in 0.000000in -0.398623in,
height=2.1126in,
width=6.8077in]{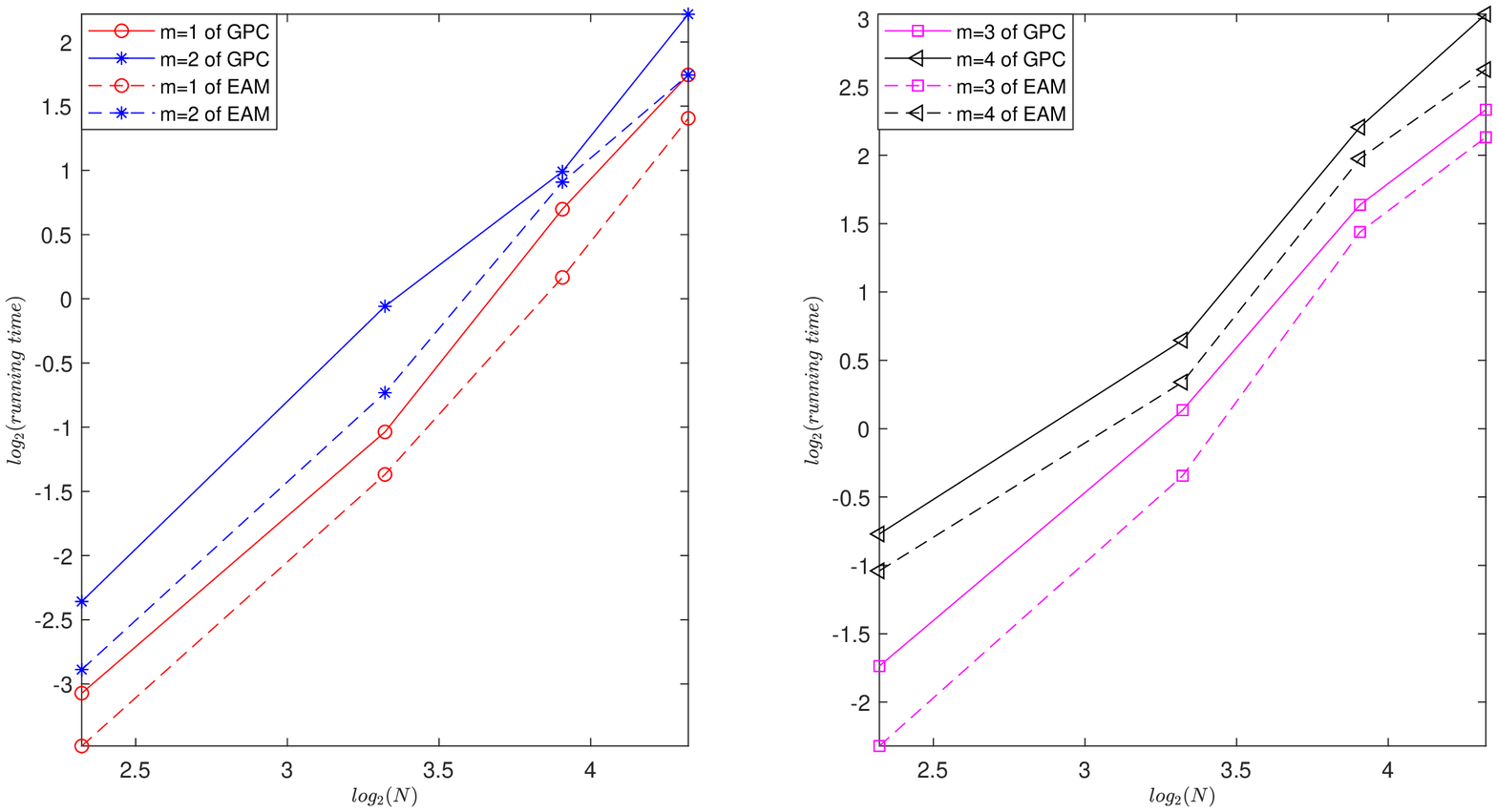}
\end{center}
\caption{The plots of $\log_{2}(running~time)$ versus $\log_{2}(N)$ with GPC
scheme and EAM scheme, $M=3000$ }%
\label{Y-CR-RT-algoritnm1}%
\end{figure}Figure 1 compares the GPC scheme with the EAM scheme in terms of
the accuracy. The left plot in Figure 1 displays the error of $|Y_{0}%
-Y_{0}^{(M)}|$ for the one-step scheme and two-step scheme. The right plot
describes the error of $|Y_{0}-Y_{0}^{(M)}|$ for the three-step scheme and
four-step scheme. Obviously, the accuracy of $Y$ obtained by the GPC scheme is
higher than that of the EAM scheme no matter the number of step is $1,2,3$ or
$4$. Figure 2 compares the GPC scheme with the EAM scheme in terms of the
computational cost. The left plot in Figure 1 displays the running time of
these two methods for the one-step scheme and two-step scheme. The right plot
describes the running time of these two methods for the three-step scheme and
four-step scheme. It is straightforward that the running time of the EAM
scheme is smaller than that of the GPC scheme no matter the number of step is
$1,2,3$ or $4$.

\textbf{Example 2.} Consider the decoupled FBSDEs (taken from \cite{38})
\begin{equation}
\left\{
\begin{array}
[c]{l}%
dX_{t}= \frac{1}{1+2\exp(t+X_{t})}dt + \frac{\exp(t+X_{t})}{1+\exp(t+X_{t}%
)}dW_{t},\\
X_{0} = x,\\
-dY_{t} = \left( -\frac{2Y_{t}}{1+\exp(t+X_{t})} - \frac{1}{2}\left(
\frac{Y_{t}Z_{t}}{1+\exp(t+X_{t})}-Y_{t}^{2}Z_{t}\right) \right) dt
-Z_{t}dW_{t},\\
Y_{T} = \frac{\exp(T+X_{T})}{1+\exp(T+X_{T})},
\end{array}
\right. \label{FBSDEs-detail}%
\end{equation}
with the analytic solutions
\begin{equation}
\left\{
\begin{array}
[c]{l}%
Y_{t} = \frac{\exp(t+X_{t})}{1+\exp(t+X_{t})},\\
Z_{t} = \frac{(\exp(t+X_{t}))^{2}}{(1+\exp(t+X_{t}))^{3}}.
\end{array}
\right. \nonumber
\end{equation}

Take $T=1,x=1, d=2,\widetilde{M}=21,h= \frac{T}{N}$. The basis functions which
are spanned by polynomials whose degree is $2$ are applied to compute the
value of $Y_{i}^{(M)}$ and $Z_{i}^{(M)}$.

\begin{figure}[ptbh]
\begin{center}
\includegraphics[
trim=0.000000in 0.000000in 0.000000in -0.398623in,
height=1.90126in,
width=6.8077in]{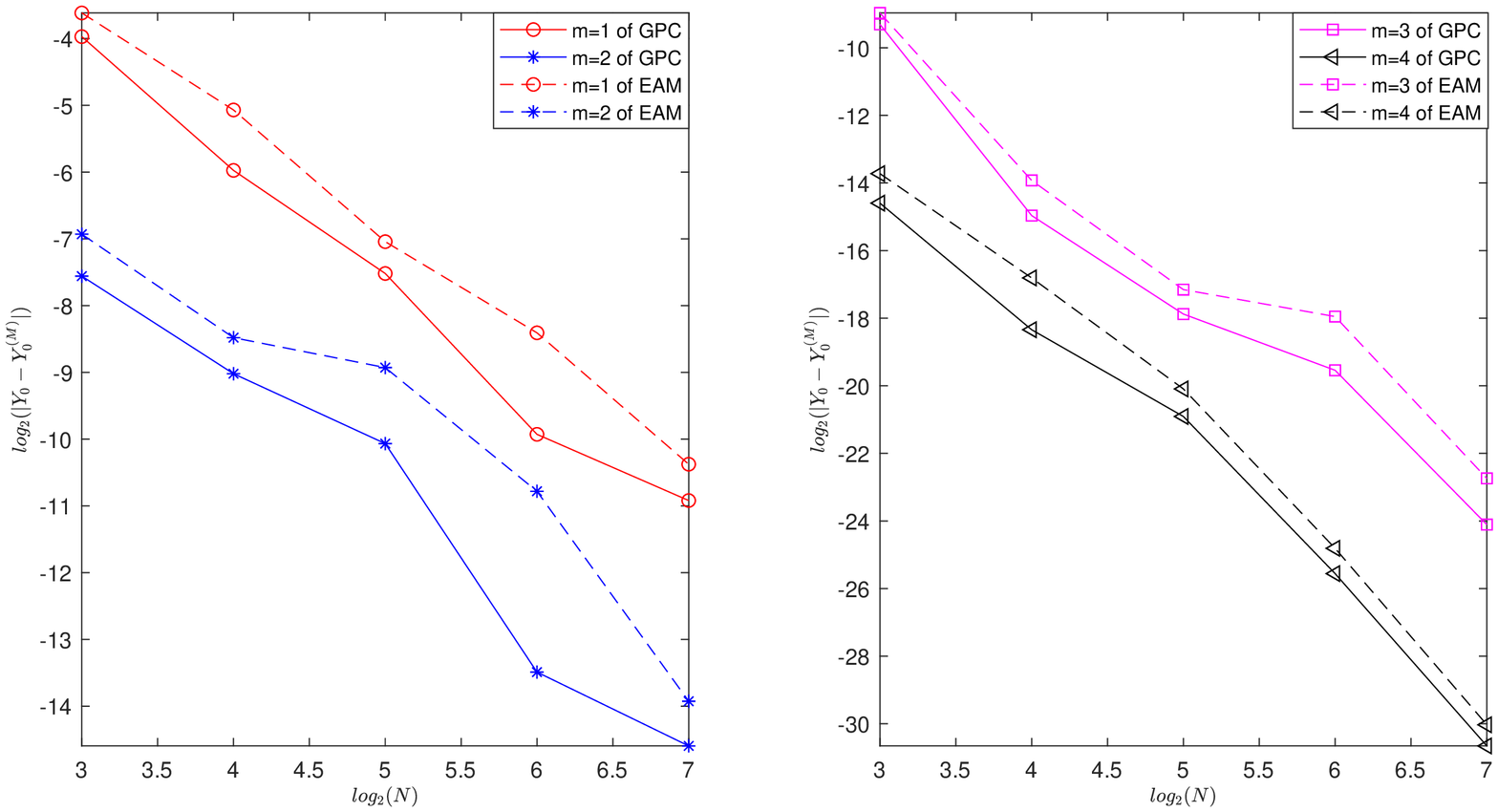}
\end{center}
\par
\caption{The plots of $\log_{2}(|Y_{0}-Y_{0}^{(M)}|)$ versus $\log_{2}(N)$
with GPC scheme and EAM scheme, $M=10000$ }%
\end{figure}\begin{figure}[ptbh]
\begin{center}
\includegraphics[
trim=0.000000in 0.000000in 0.000000in -0.398623in,
height=1.90126in,
width=6.8077in]{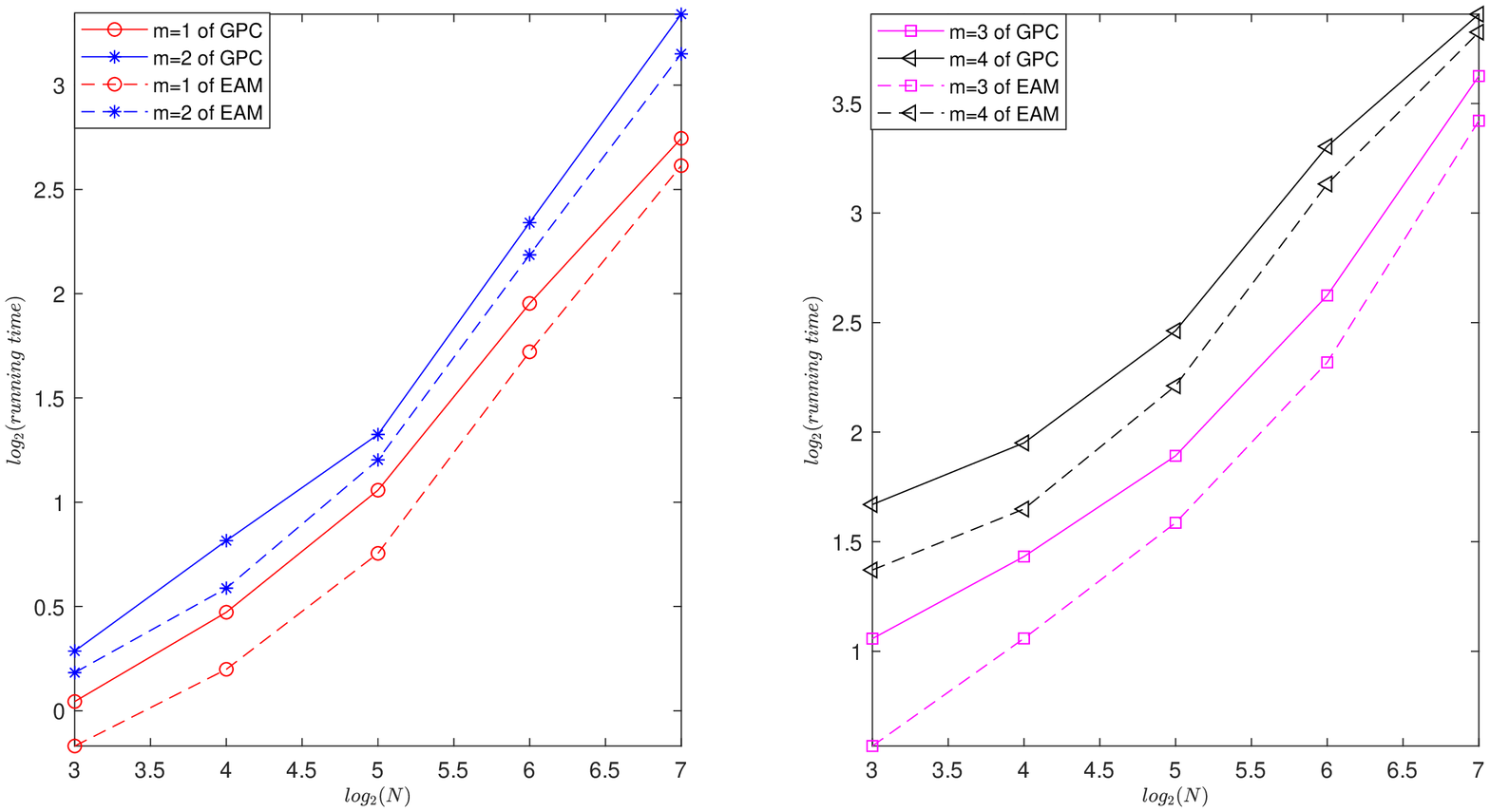}
\end{center}
\par
\caption{The plots of $\log_{2}(running~time)$ versus $\log_{2}(N)$ with GPC
scheme and EAM scheme, $M=10000$ }%
\end{figure}Figure 3 compares the GPC scheme with the EAM scheme in terms of
the error of $|Y_{0}-Y_{0}^{(M)}|$. Figure 4 compares the GPC scheme with the
EAM scheme in terms of the computational cost. These two figures imply that
the GPC scheme possesses higher accuracy than the EAM scheme while the running
time of the GPC scheme is bigger than that of the EAM scheme.

In what follows, we illustrate the case in which the condition (\ref{2-36}) is
satisfied and Dahlquist's root condition does not hold. In other words, we
provide unstable numerical scheme for decoupled FBSDE (\ref{2-1}). For $m=2$,
we introduce a two-step scheme as below
\begin{equation}
\left\{
\begin{array}
[c]{rl}%
\widetilde{Y}_{i}^{\pi}= & \mathbb{E}_{i}\Big[3Y_{i+1}^{\pi}-2Y_{i+2}^{\pi}+
\frac{1}{2}hf_{i+1}^{\pi}-\frac{3}{2}hf_{i+2}^{\pi}\Big],\\
Y_{i}^{\pi}= & \mathbb{E}_{i}\Big[3Y_{i+1}^{\pi}-2Y_{i+2}^{\pi}+h\widetilde{f}%
_{i}^{\pi} -\frac{3}{2}hf_{i+1}^{\pi}-\frac{1}{2}hf_{i+2}^{\pi}\Big],\\
Z_{i}^{\pi}= & \mathbb{E}_{i}\big[2Y_{i+1}^{\pi}\frac{(W_{i+1}-W_{i})^{\top}%
}{h} -\frac{1}{2}Y_{i+2}^{\pi}\frac{(W_{i+2}-W_{i})^{\top}}{h}\big].
\end{array}
\right. \label{5-3}%
\end{equation}
The characteristic polynomial of this two-step scheme is $P(\zeta) = \zeta
^{2}- 3\zeta+2$. Its roots $1,2$ do not fulfil Dahlquist's root condition.
That is to say, this two-step scheme is not stable.

\begin{figure}[h]
\begin{center}
\includegraphics[
trim=0.000000in 0.000000in 0.000000in -0.398623in,
height=2.8126in,
width=4.8077in]{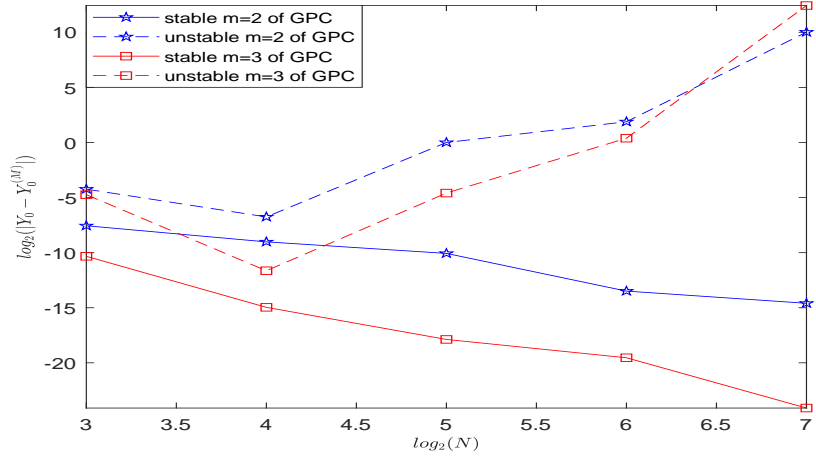}
\end{center}
\caption{The plots of $\log_{2}(|Y_{0}-Y_{0}^{(M)}|)$ versus $\log_{2}(N)$,
$M=10000$ }%
\label{YZ-unstability}%
\end{figure}

For $m=3$, we provide the following three-step scheme
\begin{equation}
\left\{
\begin{array}
[c]{rl}%
\widetilde{Y}_{i}^{\pi}= & \mathbb{E}_{i}\Big[2Y_{i+1}^{\pi}+5Y_{i+2}^{\pi
}-6Y_{i+3}^{\pi} + 2hf_{i+1}^{\pi}-6hf_{i+2}^{\pi}-2hf_{i+3}^{\pi}\Big],\\
Y_{i}^{\pi}= & \mathbb{E}_{i}\Big[2Y_{i+1}^{\pi}+5Y_{i+2}^{\pi}-6Y_{i+3}^{\pi}
-3h\widetilde{f}_{i}^{\pi} +11hf_{i+1}^{\pi}-15hf_{i+2}^{\pi}+hf_{i+3}^{\pi
}\Big],\\
Z_{i}^{\pi}= & \mathbb{E}_{i}\big[3Y_{i+1}^{\pi}\frac{(W_{i+1}-W_{i})^{\top}%
}{h} -\frac{3}{2}Y_{i+2}^{\pi}\frac{(W_{i+2}-W_{i})^{\top}}{h} + \frac{1}%
{3}Y_{i+3}^{\pi}\frac{(W_{i+3}-W_{i})^{\top}}{h}\big].
\end{array}
\right. \label{5-4}%
\end{equation}
The characteristic polynomial of the above scheme is $P(\zeta) = \zeta^{3}-
2\zeta^{2}- 5\zeta+6$. Its roots $-2,1,3$ do not fulfil Dahlquist's root
condition. That is to say, this three-step scheme is not stable.

Figure 5 provides the predictor-corrector method (\ref{2-32}) in terms of the
error of $|Y_{0}-Y_{0}^{(M)}|$. Figure 5 indicates that the variation of
errors is irregular for the unstable two-scheme (the scheme (\ref{5-3})) and
the unstable three-scheme (the scheme (\ref{5-4})). That is to say, both the
scheme (\ref{5-3}) and the scheme (\ref{5-4}) are not stable. Meanwhile,
Figure 5 shows that the errors of $|Y_{0}-Y_{0}^{(M)}|$ become smaller with
the time step sizes $N$ increasing for the stable two-scheme and the stable
three-scheme (These two schemes come from \textbf{Example 1}). In other words,
we verify that the given stable two-scheme and stable three-scheme are indeed
stable by means of a numerical example.

\end{document}